\numberwithin{equation}{section}
\patchcmd{\thesubsection}{\arabic}{\Alph}{}{}
\patchcmd{\@seccntformat}{\@secnumfont}{%
\@secnumfont\expandafter\protect\csname format#1\endcsname}{}{}
\patchcmd{\@startsection}{\@afterindenttrue}{\@afterindentfalse}{}{}
\patchcmd{\subsection}{-.5em}{.3\linespacing}{}{}
\theoremstyle{plain}
\newtheorem{theorem}{Theorem}[section]
\newtheorem{question}[theorem]{Question}
\newtheorem{proposition}[theorem]{Proposition}
\newtheorem{lemma}[theorem]{Lemma}
\newtheorem{example}[theorem]{Example}
\theoremstyle{remark}
\newtheorem{remark}[theorem]{Remark}
\newcommand{\Ker}[1]{\ensuremath{\mathrm{Ker} (#1)}}
\newcommand{\cat}[1]{\ensuremath{\mathcal{#1}}}
\newcommand{\at}[2][]{\ensuremath{\mathrm{at}_{#1} (#2)}}
\newcommand{\END}[2][]{\ensuremath{\mathcal{E}\mathit{nd}_{#1} (#2)}}
\newcommand{\id}[1]{\ensuremath{\mathbf{1}_{#1}}}
\newcommand{\p}{\ensuremath{\mathbb{P}}}
\newcommand{\ad}[1]{\ensuremath{\mathrm{ad}(#1)}}
\newcommand{\C}{\ensuremath{\mathbb{C}}}
\newcommand{\struct}[1]{\ensuremath{\mathcal{O}_{#1}}}
\newcommand{\Lie}[1]{\ensuremath{\mathrm{Lie}(#1)}}
\newcommand{\coh}[3]{\ensuremath{\mathrm{H}^{#1}(#2,\,#3)}}
\begin{document}

\title[relative Connections on Principal bundles]{relative connections on Principal bundles and relative equivariant structures}

\author[M. Poddar]{Mainak Poddar}

\address{Department of Mathematics, Indian Institute of Science Education and Research (IISER), Pune, India}
\email{mainak@iiserpune.ac.in}

\author[A. Singh]{Anoop Singh}

\address{School of Mathematics, Tata Institute of Fundamental Research,
Homi Bhabha Road, Mumbai 400005, India}

\address{Department of Mathematical Sciences, Indian Institute of Technology (BHU), Varanasi  221005, India}

\email{anoopsingh5192@gmail.com}

\subjclass[2010]{53C05, 14F10, 14H15}

\keywords{Relative holomorphic connection, Principal bundle, equivariant structure, K\"ahler manifold, Atiyah-Weil criterion.}

\begin{abstract}
We investigate relative holomorphic connections on a principal bundle over a family of compact complex manifolds. A sufficient condition is given for the 
existence of a relative holomorphic connection on a holomorphic principal bundle over a complex analytic family. We also introduce the notion of relative 
equivariant bundles and establish its relation with 
relative holomorphic connections on principal bundles.
\end{abstract}
\maketitle

\section{Introduction}
\label{Intro}
The notion of holomorphic connections on a holomorphic vector bundle was introduced by Atiyah \cite{A}, which was further generalised in many contexts in mathematics.
A well-known theorem due to Atiyah \cite{A} and Weil \cite{W} says that a holomorphic vector bundle $E$ over a compact Riemann
surface $Y$ admits a holomorphic connection if and only if the degree of every holomorphic direct summand
of $E$ is zero.  In \cite{AB}, this result was extended to holomorphic principal $G$-bundles on $Y$, where $G$ is a connected reductive complex algebraic group. Moreover, in \cite{BKN} and \cite{BP}, authors have studied the relationship between the existence of equivariant structures and holomorphic $G$-connections on a principal bundle over a 
complex manifold.

Throughout this article $\pi : X \to S$ will denote a 
surjective holomorphic proper submersion between two complex manifolds $X$ and $S$
with connected fibers.

Motivated by the above results, we have a basic question in 
the relative set up described as follows.

\begin{question}\label{qn} Let $H$ be a connected complex Lie group.
Let $E^H \,\stackrel{\varpi}{\longrightarrow}\, X \xrightarrow{\pi} S$ be a
holomorphic principal $H$-bundle over $X/S$.
Is there a good criterion for the existence of a relative holomorphic connection on 
$E^H$?
\end{question}

We tackle above question in the following manner (see Section  \ref{rel_Atiyah}).

Let $H$ and $E^H$ be as above.
Then, we construct a short exact sequence 
\begin{equation}
 \label{eq:0.9}
 0 \,\longrightarrow\, \text{ad}(E^H)\, \stackrel{\iota}{\longrightarrow}\,
\text{At}_S(E^H) \, \stackrel{ \widetilde{d \varpi} }{\longrightarrow}\,  T_{X/S} \,\longrightarrow\, 0,
 \end{equation}
 of vector bundles over $X/S$, where $\ad{E^H}$ is the adjoint vector bundle for $E^H$ and $\text{At}_S(E^H)$
 is the relative  Atiyah bundle
 for $E^H$  (see \eqref{eq:7.2}). 
 
 A relative holomorphic connection on $E^H$ is by definition  a holomorphic splitting of \eqref{eq:0.9},
 see Lemma \ref{lem:1} for equivalent conditions.

We give a sufficient condition for the existence of a relative holomorphic connection on $E^H$ (see Theorem \ref{thm:1}), more precisely, we prove the following.

Suppose that for every
$s \in S$, there is a holomorphic connection on the principal $H$-bundle
$\varpi|_{E^H_s}\, :\, E^H_s \,\longrightarrow\, X_s$, and
$$\coh{1}{S}{\pi_*(\Omega^1_{X/S} \otimes \ad{E^H})} \,=\, 0\, .$$
Then, $E^H$ admits a relative holomorphic connection.

We also note in Proposition \ref{prop:1} that the existence of a holomorphic connection on each  bundle $E_s^H$, $s \in S$, is a necessary condition for the existence of a relative holomorphic connection on $E^H$.

Let $G$ be a complex Lie group and let
$\pi : X \longrightarrow S$
be  of relative dimension $l = m-n$, that is, 
$X$ is a complex analytic family of connected complex manifolds of dimension $l$ parametrised by a complex manifold $S$ of
dimension $n$.
For every point $s \in S$, we denote $\pi^{-1}(s)$ by 
$X_s$.
Consider actions of $G$ on $X$
\begin{equation*}
 \label{eq:0.10}
 \tau : G \times X \to X \,,
 \end{equation*}
 and  on $S$
 \begin{equation*}
\label{eq:0.24.1}
\nu : G \times S \to S \,,
\end{equation*}
such that $\pi : X \to S$ is $G$-equivariant.
 
 A similar question as in Question \ref{qn}
can be asked for the existence of  relative holomorphic $G$-connections on $E^H$. For that, we proceed as follows (see Section \ref{rel_Atiyah}).

Given the action of $G$ on $X$ and $S$ such that $\pi : X \to S$ is $G$-invariant (i.e., the action $\nu$ is trivial), we also
construct a short exact sequence of holomorphic vector bundles over $X/S$
\begin{equation}
 \label{eq:0.17}
 0 \,\longrightarrow\, \text{ad}(E^H)\, \stackrel{\iota_0}{\longrightarrow}\,
\text{At}_S^\tau(E^H) \, \stackrel{q }{\longrightarrow}\,  X \times \mathfrak{g} \,\longrightarrow\, 0,
 \end{equation}
 where $\mathfrak{g}$ is the Lie algebra of $G$. 
 The vector bundle $\text{At}_S^\tau(E^H)$ mentioned in 
 \eqref{eq:0.17} is a subbundle of the vector bundle 
 $\text{At}_S(E^H) \oplus (X \times \mathfrak{g})$. By definition,
 a relative holomorphic $G$-connection on $E^H$ is the
 holomorphic  splitting of the  short exact sequence 
 \eqref{eq:0.17}. We prove a sufficient condition for the existence of the relative holomorphic $G$-connection
 on $E^H$ (see Theorem \ref{thm:2}). Again, the existence of a holomorphic $G$-connection on each $E^H_s$ is a necessary condition
 (see Proposition \ref{prop:2}), and a part of the sufficient condition, for the existence of a relative holomorphic $G$-connection on $E^H$. 
 
 To illustrate Theorem \ref{thm:1} and Theorem \ref{thm:2}, we give  examples of the existence of relative 
 holomorphic connections and $G$-connections on $E^H$ where $S$ is a Stein manifold (see Example \ref{exmp:Stein}). We also give an example of a relative $G$-connection on $E^H$ where $S$ is a projective space and $H$ is an abelian complex Lie group (see Example \ref{exmp:rel}).

Note that the notion of a $G$-connection on $E^H$ depends on the $G$-action on the base $X$. This is useful as a $G$-connection can then serve as a tool to determine if a $G$-action on $X$ can be lifted to a $G$-action on $E^H$.
 In fact, the authors of \cite{BKN} and \cite{BP} have studied $G$-equivariant structure  on a principal $H$-bundle over a connected complex manifold from a connection theoretic perspective. Inspired by their work, our aim  in this article is to study the relative aspect of such $G$-equivariant structure on a family of principal $H$-bundles using relative $G$-connections.  

In section \ref{lift}, we consider the group  $\text{Aut}_S(E^H) $ of relative automorphisms  of $E^H$
over $X/S$. 
Let 
\begin{equation*}
\label{eq:0.28}
G_S \subset G
\end{equation*}
be the subset consisting of all $g \in G$ such that for 
every $s \in S$ the pulled back principal $H$-bundle 
$\tau_g^* E^H_{\nu_g(s)}$ is isomorphic to $E^H_s$.

Let $\cat{G}_S$ denote the space of all pairs of the form 
$(\theta, g)$ where $g \in G_S$, and 
$\theta : E^H \longrightarrow E^H$ is a holomorphic automorphism such that for every $s \in S$, 
$\theta_s : E^H_s \to E^H_{\nu_g(s)}$ is an isomorphism
over $\tau_g : X_s \to X_{\nu_g(s)}$. 
Under the assumption that $\pi: X \to S$ is $G$-invariant, we show that the Lie algebra of $\cat{G}_S$ is canonically identified  with the Lie algebra $\coh{0}{X}{\text{At}_S^\tau(E^H)}$ (see Proposition \ref{prop:4}). We also show that the holomorphic principal $H$-bundle $E^H$ admits a tautological
relative holomorphic $\cat{G}_S$-connection. The relative curvature of this  relative holomorphic $\cat{G}_S$-connection on $E^H$ vanishes identically (see Proposition \ref{prop:6}).

In section \ref{equi}, we define the relative equivariant structure on the principal $H$-bundle 
$E^H$ with respect to the given group $G$. We denote the 
action of $G$ on $E^H$ by $\sigma^E$ (see \eqref{eq:34}), and relative equivariant structure by the pair 
$(E^H, \sigma^E)$. When $\pi : X \to S$ is $G$-invariant,
we show that for a given relative equivariant structure 
$(E^H, \sigma^E)$ over $X/S$, $E^H$ admits a tautological relative holomorphic $G$-connection and the relative curvature of this relative holomorphic $G$-connection vanishes identically (see Proposition \ref{prop:7}). Under the assumption that $\pi : X \to S$ is $G$-invariant, and 
that $G$ is a semisimple and simply connected
affine algebraic group defined over $\C$, we also show that (see Theorem \ref{thm:3}), if 
$E^H \,\stackrel{\varpi}{\longrightarrow}\, X \xrightarrow{\pi} S$
admits a relative holomorphic $G$-connection $h$, then $E^H$
admits a relative equivariant structure 
$\sigma^E : G \times E^H \longrightarrow E^H.$

\subsection*{Acknowledgments} Both the authors would like to express their gratitude to Indranil Biswas for his collaboration with them on closely related topics and for generously sharing his knowledge. They also thank the anonymous referee for interesting comments and questions that helped to improve the manuscript.  The research of the first named author was supported in part by the SERB MATRICS Grant MTR/2019/001613. The second named author thanks IISER Pune for their hospitality while the work was carried out.

\section{Relative Atiyah sequence and group action }
\label{rel_Atiyah}
\subsection{Relative Atiyah exact sequence of a principal $H$-bundle}
Let $X$ and $S$ be two complex manifolds of dimensions $m$ and $n$ respectively.
Let 
\begin{equation}
\label{eq:0}
\pi : X \longrightarrow S
\end{equation}
be a holomorphic  surjective submersion of relative dimension $l = m-n$
such that the fibres are connected, that is, 
$X$ is a complex analytic family of connected complex manifolds of dimension $l$ parametrised by $S$.
For every point $s \in S$, we denote $\pi^{-1}(s)$ by 
$X_s$.

Let $H$ be a connected complex Lie group. We denote by 
$\mathfrak{h}$ its Lie algebra.

By a family of holomorphic principal $H$-bundles parametrised by $S$, we mean a holomorphic principal $H$-bundle
\begin{equation}
\label{eq:1}
\varpi : E^{H} \longrightarrow X
\end{equation}
over $X$ such that for every $s \in S$ the restriction
\begin{equation}
\label{eq:2}
\varpi \vert_{X_s}: E^H_s := E^H \vert_{X_s} \longrightarrow X_s
\end{equation}
is a holomorphic principal $H$-bundle over $X_s$.
Note that $H$ acts on both $X$ and $S$ trivially.

Let $d\pi\,:\, TX \,\longrightarrow\, \pi^*TS$ be the differential of $\pi$ in \eqref{eq:0}, where $TX$ and $TS$ be the holomorphic tangent bundles of $X$ and $S$
respectively.
The subbundle $$T_{X/S}\, :=\, \Ker{d\pi}\, \subset\, TX$$ is called the relative tangent
bundle for $\pi$. Thus we have a short exact sequence
of vector bundles
\begin{equation}\label{eq:ses1}
0 \,\longrightarrow\, T_{X/S}\, \stackrel{\imath}{\longrightarrow}\,
TX \, \stackrel{d\pi}{\longrightarrow}\, \pi^* TS \,\longrightarrow\, 0
\end{equation}
over $X$.

Consider the composition
\begin{equation}
\label{eq:3}
\pi \circ \varpi : E^{H} \longrightarrow S.
\end{equation}
Let 
\begin{equation}
\label{eq:4}
d (\pi \circ \varpi) : T E^{H} \longrightarrow (\pi \circ \varpi)^* TS
\end{equation}
be the differential of $\pi \circ \varpi$ in \eqref{eq:3}, where $T E^H$ is the holomorphic tangent bundle of 
$E^H$.
Its kernel
$$T_{E^H / S} :=  \Ker{d (\pi \circ \varpi)}$$
is known as relative tangent bundle for $\pi \circ \varpi$.
Moreover,
the restriction of the differential 
$$d \varpi : TE^H \longrightarrow \varpi^* TX$$
of $\varpi$ in \eqref{eq:1} to $T_{E^H/S}$ gives a morphism of bundles
\begin{equation}
\label{eq:5}
d \varpi' := (d \varpi )\vert_{T_{E^H/S}} : T_{E^H/S} \longrightarrow \varpi^* T_{X/S}
\end{equation} over $E^H$.
We denote its kernal by 
$$T_{E^H/X/S} := \Ker{(d \varpi) \vert_{T_{E^H/S}}}.$$
 So, we get a short exact sequence of vector bundles 
 \begin{equation}
 \label{eq:6}
 0 \,\longrightarrow\, T_{E^H/X/S}\, \stackrel{\imath}{\longrightarrow}\,
T_{E^H/S} \, \stackrel{d \varpi' }{\longrightarrow}\, \varpi^* T_{X/S} \,\longrightarrow\, 0
 \end{equation}
 over $E^H$.
 
 Note that we also have relative bundle for $\varpi$
 denoted as $T_{E^H/X} := \Ker{d \varpi}$. Moreover,
 \begin{equation}
 \label{eq:6.1}
 T_{E^H/X} \subset T_{E^H/S},
 \end{equation}
 and 
 \begin{equation}
 \label{eq:6.2}
 T_{E^H/X/S} \cong T_{E^H/X}.
 \end{equation}
 
 Let 
 \begin{equation}
 \label{eq:7}
 \sigma : E^H \times H \longrightarrow E^H
 \end{equation}
 be the action of $H$ on $E^H$. Note that the action of 
 $H$ on each fibre of $\varpi$ is free and transitive.
 The differential of $\sigma$ in \eqref{eq:7} induces a 
 homomorphism from the trivial vector bundle on $E^H$
 with fibre $\mathfrak{h}$
 $$E^H \times \mathfrak{h} \longrightarrow TE^H,$$
 and we have an isomorphism 
\begin{equation}
\label{eq:7.1}
T_{E^H/X/S} \cong E^H \times \mathfrak{h}
\end{equation} 
 of vector bundles over $E^H$.
 
 The action $\sigma$ in \eqref{eq:7} induces an action of $H$ on the total space of relative tangent bundle 
 $T_{E^H/S}$.
 The quotient 
 \begin{equation}
 \label{eq:7.2}
 \text{At}_S(E^H) := (T_{E^H/S})/H
 \end{equation}
is a holomorphic vector bundle over $X/S$, which is known as relative Atiyah bundle (see \cite{A}, \cite{BS}).

There is an adjoint action of $H$ on its Lie algebra $\mathfrak{h}$, which will induce an action of $H$ on the vector bundle 
$E^H \times \mathfrak{h}$.
Consider the quotient 
\begin{equation}
\label{eq:8}
\text{ad}(E^H) := E^H \times^H \mathfrak{h} = E^H \times \mathfrak{h}/H
\end{equation} 
which is known as adjoint vector bundle associated to $E^H$. 
From the identification \eqref{eq:7.1}, we have 
$$\text{ad}(E^H) = T_{E^H/X/S}/ H.$$

Thus, after taking the quotient by $H$, the short exact 
sequence in \eqref{eq:6} gives a short exact sequence  of holomorphic vector bundles 
\begin{equation}
 \label{eq:9}
 0 \,\longrightarrow\, \text{ad}(E^H)\, \stackrel{\iota}{\longrightarrow}\,
\text{At}_S(E^H) \, \stackrel{ \widetilde{d \varpi} }{\longrightarrow}\,  T_{X/S} \,\longrightarrow\, 0
 \end{equation}
 over $X$, which is known as {\it relative Atiyah exact 
 sequence} \cite{BS}, where $\widetilde{d \varpi}$ is 
 given by 
 $d \varpi'$ in \eqref{eq:5}.
 
 A {\it relative holomorphic connection} on $E^H$ is a 
 holomorphic splitting of the relative Atiyah exact 
 sequence in \eqref{eq:9}.
 
 Tensoring the short exact sequence in \eqref{eq:9} by
 the relative cotangent bundle $\Omega^1_{X/S}$, we get
 the following short exact sequence 
 \begin{equation}
 \label{eq:9.1}
 0 \,\longrightarrow\, \Omega^1_{X/S} \otimes \text{ad}(E^H)\, \stackrel{\iota}{\longrightarrow}\,
\Omega^1_{X/S} \otimes \text{At}_S(E^H) \, \stackrel{ \widetilde{d \varpi} }{\longrightarrow}\,  \END[\struct{X}]{T_{X/S}} \,\longrightarrow\, 0
 \end{equation}
 
 The above short exact sequence \eqref{eq:9.1} of \struct{X}-modules gives a long exact sequence of $\C$-vector spaces
\begin{equation}
\label{eq:9.2}
\cdots \to  \coh{0}{X}{\Omega^1_{X/S} \otimes \text{At}_S(E^H)} \to \coh{0}{X}{\END[\struct{X}]{T_{X/S}}} \xrightarrow{\delta}
\coh{1}{X}{\Omega^1_{X/S} \otimes \text{ad}(E^H)} \to \cdots,
\end{equation} 
where $\delta$ is the connecting homomorphism.
Now, the extension class of the relative Atiyah exact 
sequence is defined  by
\begin{equation}
\label{eq:9.3}
\at[S]{E^H} := \delta(\id{T_{X/S}}) \in \coh{1}{X}{\Omega^1_{X/S} \otimes \text{ad}(E^H)},
\end{equation}
which is also known as relative Atiyah class of the bundle $E^H$.

\begin{lemma}
\label{lem:1}
Let $E^H \,\stackrel{\varpi}{\longrightarrow}\, X \xrightarrow{\pi} S$
be a holomorphic principal $H$-bundle. Then, the followings are equivalent.
\begin{enumerate}
\item $E^H$ admits a relative holomorphic connection.
\item The relative Atiyah exact sequence for $E^H$ in 
\eqref{eq:9} splits.
\item The relative Atiyah class $\at[S]{E^H}$ vanishes.
\end{enumerate}
\end{lemma}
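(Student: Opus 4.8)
The plan is to prove the cyclic chain of implications $(1)\Rightarrow(2)\Rightarrow(3)\Rightarrow(1)$, of which the first is essentially a matter of unwinding definitions and the real content lies in the equivalence between splitting of \eqref{eq:9} and vanishing of the class $\at[S]{E^H}$ defined in \eqref{eq:9.3}. Since a relative holomorphic connection on $E^H$ is \emph{defined} to be a holomorphic splitting of the relative Atiyah exact sequence \eqref{eq:9}, the equivalence $(1)\Leftrightarrow(2)$ is immediate and I would dispose of it in one sentence. So the substantive task is $(2)\Leftrightarrow(3)$.

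For $(2)\Rightarrow(3)$: a holomorphic splitting of \eqref{eq:9} is a bundle map $s\colon T_{X/S}\to \At[S]{E^H}$ with $\widetilde{d\varpi}\circ s=\id{T_{X/S}}$. Viewing $s$ as an element of $\coh{0}{X}{\Omega^1_{X/S}\otimes\At[S]{E^H}}$ that maps to $\id{T_{X/S}}\in\coh{0}{X}{\END[\struct{X}]{T_{X/S}}}$ under the arrow in \eqref{eq:9.2}, exactness of the long exact sequence \eqref{eq:9.2} forces $\delta(\id{T_{X/S}})=0$, i.e. $\at[S]{E^H}=0$. For the converse $(3)\Rightarrow(2)$: if $\delta(\id{T_{X/S}})=0$ then, again by exactness of \eqref{eq:9.2}, $\id{T_{X/S}}$ lifts to some $s\in\coh{0}{X}{\Omega^1_{X/S}\otimes\At[S]{E^H}}=\Hom[\struct{X}]{T_{X/S}}{\At[S]{E^H}}$, and this $s$ is by construction a global holomorphic right inverse of $\widetilde{d\varpi}$, hence a splitting of \eqref{eq:9}. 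The only point requiring care is the standard identification $\coh{0}{X}{\Omega^1_{X/S}\otimes\sheaf{F}}\cong\Hom[\struct{X}]{T_{X/S}}{\sheaf{F}}$ (and likewise with $\sheaf{F}=T_{X/S}$ giving $\END[\struct{X}]{T_{X/S}}$), together with the remark that under these identifications the map in \eqref{eq:9.2} sends a homomorphism $T_{X/S}\to\At[S]{E^H}$ to its composite with $\widetilde{d\varpi}$; once this is in place the argument is purely homological.

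I do not anticipate a serious obstacle here; this is a formal consequence of the long exact cohomology sequence associated to the tensored Atiyah sequence \eqref{eq:9.1}. The one thing worth stating explicitly, to keep the argument honest, is \emph{local freeness}: because $T_{X/S}$ is locally free, tensoring \eqref{eq:9} with $\Omega^1_{X/S}$ preserves exactness, so \eqref{eq:9.1} is genuinely a short exact sequence of $\struct{X}$-modules and \eqref{eq:9.2} is its long exact sequence in sheaf cohomology — this is what licenses both directions of $(2)\Leftrightarrow(3)$. I would write the proof in three short paragraphs: (i) $(1)\Leftrightarrow(2)$ by definition; (ii) recalling the identifications and the description of the connecting map $\delta$; (iii) the two exactness arguments for $(2)\Leftrightarrow(3)$, concluding the cycle.
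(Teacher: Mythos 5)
Your proof is correct and is exactly the argument the paper intends: the lemma is stated without proof, but the machinery set up in \eqref{eq:9.1}--\eqref{eq:9.3} (the tensored Atiyah sequence, its long exact cohomology sequence, and the definition $\at[S]{E^H}=\delta(\id{T_{X/S}})$) is precisely what your exactness argument uses, and your explicit remarks about local freeness of $T_{X/S}$ and the identification $\coh{0}{X}{\Omega^1_{X/S}\otimes\sheaf{F}}\cong\Hom[\struct{X}]{T_{X/S}}{\sheaf{F}}$ supply the details the paper leaves implicit. Nothing is missing.
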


 \begin{proposition}[family of holomorphic connections]
 \label{prop:1}
 Suppose that $E^H$ admits a relative holomorphic connection. Then, we have a family of holomorphic connections on $\{E^H_s\}_{s\in S}$.
 \end{proposition}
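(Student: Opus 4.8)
The plan is to restrict everything to a single fibre. Fix $s \in S$. The holomorphic principal $H$-bundle $\varpi\vert_{X_s} \colon E^H_s \to X_s$ over the compact complex manifold $X_s$ has its ordinary Atiyah exact sequence
\begin{equation*}
0 \longrightarrow \text{ad}(E^H_s) \longrightarrow \text{At}(E^H_s) \xrightarrow{\ d\varpi_s\ } TX_s \longrightarrow 0,
\end{equation*}
and I claim that the restriction of the relative Atiyah exact sequence \eqref{eq:9} to $X_s$ is canonically isomorphic to this sequence. Granting the claim, a relative holomorphic connection on $E^H$ — i.e. a holomorphic splitting of \eqref{eq:9}, say a holomorphic section $\lambda \colon T_{X/S} \to \text{At}_S(E^H)$ of $\widetilde{d\varpi}$ — restricts to a holomorphic morphism $\lambda\vert_{X_s} \colon TX_s \to \text{At}(E^H_s)$ with $d\varpi_s \circ \lambda\vert_{X_s} = \id{TX_s}$, because restriction of sheaves is a functor and therefore preserves the identity $\widetilde{d\varpi}\circ\lambda = \id{T_{X/S}}$. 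Thus $\lambda\vert_{X_s}$ is a holomorphic connection on $E^H_s$, and letting $s$ vary over $S$ produces the asserted family.

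It remains to prove the claim. Since a short exact sequence of holomorphic vector bundles is locally split, its restriction to the submanifold $X_s$ stays exact, so it is enough to identify the three terms and the two morphisms. For the quotient term, $T_{X/S}\vert_{X_s} \cong TX_s$ holds by definition of $T_{X/S} = \Ker{d\pi}$, as $X_s$ is a fibre of the submersion $\pi$. For the sub-term, restricting the associated-bundle construction \eqref{eq:8} gives $\text{ad}(E^H)\vert_{X_s} = (E^H \times^H \mathfrak{h})\vert_{X_s} = E^H_s \times^H \mathfrak{h} = \text{ad}(E^H_s)$. The middle term is the only point needing care: by construction $\text{At}_S(E^H) = (T_{E^H/S})/H$ with $T_{E^H/S} = \Ker{d(\pi\circ\varpi)}$, and since $E^H_s = \varpi^{-1}(X_s) = (\pi\circ\varpi)^{-1}(s)$ is a fibre of the submersion $\pi\circ\varpi$, one has $T_{E^H/S}\vert_{E^H_s} = TE^H_s$, the tangent bundle of the submanifold $E^H_s \subset E^H$. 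As $E^H_s$ is an $H$-invariant submanifold of $E^H$, the $H$-action on $T_{E^H/S}$ restricts over $E^H_s$ to the $H$-action on $TE^H_s$ underlying the principal bundle $E^H_s \to X_s$; since forming the $H$-quotient commutes with restriction along the $\varpi$-saturated subset $E^H_s$, we get $\text{At}_S(E^H)\vert_{X_s} = (TE^H_s)/H = \text{At}(E^H_s)$. Under these identifications $\widetilde{d\varpi}\vert_{X_s}$ is induced by $(d\varpi')\vert_{T_{E^H_s/S}}$, which is precisely $d\varpi_s$, and $\iota\vert_{X_s}$ becomes the inclusion $\text{ad}(E^H_s) \hookrightarrow \text{At}(E^H_s)$; hence the restricted sequence is the Atiyah sequence of $E^H_s$, completing the proof.

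I expect the identification $\text{At}_S(E^H)\vert_{X_s} \cong \text{At}(E^H_s)$, together with the compatibility of $\widetilde{d\varpi}$ with it, to be the only substantive step; everything else is functoriality of restriction and of the associated-bundle construction. One could alternatively phrase the whole argument in terms of extension classes: restriction sends $\at[S]{E^H}\in\coh{1}{X}{\Omega^1_{X/S}\otimes\text{ad}(E^H)}$ to the Atiyah class of $E^H_s$, so vanishing of the former forces vanishing of the latter for each $s$; but the direct restriction of the splitting is cleaner and yields the connections explicitly.
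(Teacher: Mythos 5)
Your argument is correct and follows essentially the same route as the paper: the paper records the identification of the restricted relative Atiyah sequence with the fibrewise Atiyah sequence in the commutative diagram \eqref{eq:cd1} and then restricts the splitting, which is exactly what you do (with the fibrewise identifications spelled out in more detail). No gap.
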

 \begin{proof}
 The proof easily follows from the following commutative 
 diagram
\begin{equation}
\label{eq:cd1}
\xymatrix{
0 \ar[r] & \ad{E^H} \ar[d]^{r_s} \ar[r]^{\iota} & \text{At}_S(E^H) 
\ar[d]^{r_s} \ar[r]^{ \widetilde{d \varpi}} & T_{X/S} \ar[d]^{r_s} \ar[r] & 0 \\
0 \ar[r] & \ad{E^H_s} \ar[r]^{\iota_s} & \text{At}(E^H_s) \ar[r]^{\widetilde{d \varpi}_s} & T_{X_s} \ar[r] & 0 
}
\end{equation} 
where $r_s$ denotes the corresponding restriction map
for every $s \in S$,  and the bottom exact sequence is the Atiyah exact sequence for the principal $H$-bundle $E^H_s$ over $X_s$ (see \cite{A}).
The holomorphic splitting of the top exact sequence in 
\eqref{eq:cd1}
will induce a holomorphic splitting of the bottom exact 
sequence in \eqref{eq:cd1}.
 
 \end{proof}
 
 \subsection{Relative Atiyah bundle for group action}
 \label{group_action}
 Let $G$ be a connected complex Lie group acting holomorphically on $X$ and $S$ such that the holomorphic map $\pi : X \to S$ in \eqref{eq:0}
 is $G$-invariant. 
 
 Let 
 \begin{equation}
 \label{eq:10}
 \tau : G \times X \to X
 \end{equation}
 denote the action of $G$ on $X$ from left. Let $\mathfrak{g}$ denote the Lie algebra of $G$.
 
 Since $\pi : X \to S$ is $G$-invariant, 
 the differential of $\tau$ in \eqref{eq:10} induces an
 $\struct{X}$-linear morphism
 of vector bundles 
 \begin{equation}
 \label{eq:11}
 d'\tau : X \times \mathfrak{g} \longrightarrow T_{X/S}
 \end{equation}
 over $X$, where $X \times \mathfrak{g}$ is the trivial 
 vector bundle with fibre $\mathfrak{g}$ over $X$ and 
 $\struct{X}$ is the sheaf of holomorphic functions on $X$.
 Note that the image of $d'\tau$ need not be a subbundle of $T_{X/S}$.
 
 Define a holomorphic homomorphism of vector bundles 
 \begin{equation}
 \label{eq:12}
 \mu : \text{At}_S(E^H) \oplus (X \times \mathfrak{g})
 \longrightarrow T_{X/S}
 \end{equation}
over $X$ by 
 \begin{equation}
 \label{eq:13}
 \mu(u,v) = \widetilde{d \varpi} (u) - d'\tau(v),
 \end{equation}
 where $\widetilde{d \varpi}$ and $d' \tau$ are as given in equations \eqref{eq:9} and \eqref{eq:11} respectively.
 Since $\widetilde{d \varpi}$ is surjective, $\mu$ is 
 surjective. 
 
 Define an $\struct{X}$-submodule 
 \begin{equation}
 \label{eq:14}
 \text{At}_S^\tau(E^H) := \mu^{-1}(0) \subset \text{At}_S(E^H) \oplus (X \times \mathfrak{g}),
 \end{equation}
 which is in fact a subbundle, because $\widetilde{d \varpi}$ is surjective. 
 
 In view of definition of $\text{At}_S^\tau(E^H)$ in \eqref{eq:14}, we have two holomorphic homomorphisms
 \begin{equation}
 \label{eq:15}
 \iota_0 : \text{ad}(E^H) \longrightarrow \text{At}_S^\tau(E^H), \,\,\,\,\,\, u \mapsto (\iota(u), 0),
\end{equation}  
 where $\iota$ is defined in \eqref{eq:9}, and
 \begin{equation}
 \label{eq:16}
 q: \text{At}_S^\tau(E^H) \longrightarrow X \times \mathfrak{g}, \,\,\,\,\,\, (u, v) \mapsto v,
 \end{equation}
 where $u \in \text{At}_S(E^H)$ and $v \in X \times \mathfrak{g}$.
 
 Note that $q$ in \eqref{eq:16} is surjective because 
 $\widetilde{d \varpi}$ is surjective.
 
 Thus, we have a short exact sequence of holomorphic vector bundles over $X$
 \begin{equation}
 \label{eq:17}
 0 \,\longrightarrow\, \text{ad}(E^H)\, \stackrel{\iota_0}{\longrightarrow}\,
\text{At}_S^\tau(E^H) \, \stackrel{q }{\longrightarrow}\,  X \times \mathfrak{g} \,\longrightarrow\, 0
 \end{equation}
 
 A relative holomorphic $G$-connection on the principal 
 $H$-bundle $E^H$ is a holomorphic splitting of 
 \eqref{eq:17}, that is,
 there exists a holomorphic homomorphism of vector bundles
 \begin{equation*}
 \label{eq:18}
 h : X \times \mathfrak{g} \longrightarrow \text{At}_S^{\tau} (E^H)
 \end{equation*}
 such that 
 \begin{equation*}
 \label{eq:19}
 q \circ h = \id{X \times \mathfrak{g}}.
 \end{equation*}

\begin{remark}\label{conn-vs-G-conn}
It is easy to observe that the surjectivity of the map, $d'\tau : X \times \mathfrak{g} \longrightarrow T_{X/S} $, is a necessary condition for a  (relative) $G$-connection
to be a (relative) holomorphic connection. 
But, this is not a sufficient condition.
However, a (relative) $G$-connection corresponds to a (relative) holomorphic connection if  $d'\tau$ is an isomorphism; for instance, when the action $\tau: G \times X \to X$ is free and the dimensions of $G$ and $X_s$ are equal. When $d'\tau$ is an isomorphism, there is an isomorphism 
 $\phi: \text{At}_S (E^H) \to \text{At}_S^{\tau} (E^H) $ 
defined by $\phi(u) = (u, (d'\tau)^{-1} \widetilde{d \varpi} (u) )$. The inverse of $\phi$ has the simple formula, 
$\phi^{-1}(u,v) = u$. Moreover, given a splitting 
$\eta^{\tau}$ of \eqref{eq:17}, we have a splitting $\eta$ of the relative Atiyah sequence \eqref{eq:9} given by $$\eta(y) := \phi^{-1}(\eta^{\tau}(d'\tau)^{-1}(y))\, .$$ 
%Similarly, given $d'\tau$ is an isomorphism, a splitting  of \eqref{eq:9} induces a splitting of \eqref{eq:17}.
\end{remark}
 
 Let $V$ denote the trivial vector bundle $X \times \mathfrak{g}$ over $X$.
 Let $$\at[S]{E^H}_\tau \in \coh{1}{X}{V^* \otimes \text{ad}(E^H)}$$ be the extension class of the short exact 
 sequence \eqref{eq:17} and we call it the relative $G$-Atiyah 
 class of the principal $H$-bundle $E^H$ for the action 
 $\tau$.

 \begin{lemma}
\label{lem:2}
Let $G$ acts on $X$ and $S$ such that the morphism 
$\pi : X \to S$ is $G$-invariant.
Let $E^H \,\stackrel{\varpi}{\longrightarrow}\, X \xrightarrow{\pi} S$
be a holomorphic principal $H$-bundle. Then, the followings are equivalent.
\begin{enumerate}
\item $E^H$ admits a relative holomorphic $G$-connection.
\item The relative Atiyah exact sequence for $E^H$ in 
\eqref{eq:17} splits.
\item The relative $G$-Atiyah class $\at[S]{E^H}_{\tau}$ vanishes.
\end{enumerate}
\end{lemma}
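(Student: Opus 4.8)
The plan is to mirror the proof of Lemma \ref{lem:1}, since \eqref{eq:17} is, just like \eqref{eq:9}, a short exact sequence of holomorphic vector bundles over $X$ whose extension class $\at[S]{E^H}_\tau$ is obtained --- exactly as in \eqref{eq:9.3} --- as the image of the identity section under the connecting homomorphism of an associated long exact cohomology sequence. The equivalence of (1) and (2) then requires no argument at all: by definition, a relative holomorphic $G$-connection on $E^H$ \emph{is} a holomorphic splitting of \eqref{eq:17}. So the content is the equivalence (2) $\Leftrightarrow$ (3).

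For that, I would first recall the point established in the paragraph following \eqref{eq:14}: $\text{At}_S^\tau(E^H) = \mu^{-1}(0)$ is locally free, because $\mu$ in \eqref{eq:12} is surjective (hence locally split). Thus \eqref{eq:17} is an honest short exact sequence of locally free $\struct{X}$-modules, and it remains exact after tensoring with the locally free sheaf $V^*$, where $V := X \times \mathfrak{g}$, producing the short exact sequence
\[
0 \,\longrightarrow\, V^* \otimes \ad{E^H} \,\longrightarrow\, V^* \otimes \text{At}_S^\tau(E^H) \,\stackrel{q}{\longrightarrow}\, \END[\struct{X}]{V} \,\longrightarrow\, 0
\]
(with the induced map still denoted $q$). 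Its long exact cohomology sequence contains the segment
\[
\coh{0}{X}{V^* \otimes \text{At}_S^\tau(E^H)} \,\stackrel{q}{\longrightarrow}\, \coh{0}{X}{\END[\struct{X}]{V}} \,\stackrel{\delta}{\longrightarrow}\, \coh{1}{X}{V^* \otimes \ad{E^H}},
\]
and by definition $\at[S]{E^H}_\tau = \delta(\id{V})$. A holomorphic splitting of \eqref{eq:17} is precisely a global homomorphism $h \in \coh{0}{X}{V^* \otimes \text{At}_S^\tau(E^H)}$ with $q(h) = \id{V}$ (using also that a splitting of locally free $\struct{X}$-modules over $X$ is the same datum as a holomorphic splitting of the corresponding vector bundles). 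Such an $h$ exists if and only if $\id{V} \in \Img{q} = \Ker{\delta}$, the equality holding by exactness --- that is, if and only if $\delta(\id{V}) = \at[S]{E^H}_\tau = 0$. This gives (2) $\Leftrightarrow$ (3) and closes the cycle of implications.

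I do not expect any genuine obstacle here: the argument is purely homological and identical in form to that of Lemma \ref{lem:1}. The only fact that has to be in place --- and it already is, by the construction of $\text{At}_S^\tau(E^H)$ in \eqref{eq:14} --- is the local freeness of that bundle, which is what makes \eqref{eq:17} a short exact sequence of vector bundles and lets the extension-class formalism apply; note that the image of $d'\tau$ in \eqref{eq:11} itself need not be a subbundle of $T_{X/S}$, so it is essential that one works with the kernel $\mu^{-1}(0)$ rather than with any quotient by $\Img{d'\tau}$.
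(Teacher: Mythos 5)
Your argument is correct and is exactly the standard extension-class argument the paper implicitly relies on (it states Lemma~\ref{lem:2}, like Lemma~\ref{lem:1}, without proof, and the same mechanism appears explicitly via the connecting homomorphism $\Phi$ in the proof of Theorem~\ref{thm:1}). The identification of $\at[S]{E^H}_\tau$ with $\delta(\id{V})$, and the equivalence of splitting with $\id{V}\in\Img{q}=\Ker{\delta}$, is precisely what is intended.
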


%\begin{remark}
%If the action $\tau$ in \eqref{eq:10} is free and transitive, then the quotient space $X/G$ is isomorphic to $S$. In that case the map in \eqref{eq:11},
%$d'\tau : X \times \mathfrak{g} \longrightarrow T_{X/S} $ is an isomorphism.  Then, the short exact sequence in \eqref{eq:17} reduces to the short exact sequence in \eqref{eq:9}, and hence relative holomorphic $G$-connection on $E^H$  is the relative holomorphic connection on $E^H$.
%\end{remark}

 \begin{proposition} [family of holomorphic $G$-connections]
 \label{prop:2}
 Let $\pi : X \to S$ be $G$-invariant. Then, a relative 
 holomorphic $G$-connection on $E^H$ induces a family of 
 holomorphic $G$-connections on $\{E^H_s\}_{s \in S}$.
 \end{proposition}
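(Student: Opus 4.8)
The plan is to adapt the proof of Proposition~\ref{prop:1} almost verbatim, replacing the relative Atiyah sequence \eqref{eq:9} by the relative $G$-Atiyah sequence \eqref{eq:17}. First I would record that, since $\pi : X \to S$ is $G$-invariant, the action $\tau$ in \eqref{eq:10} carries each fibre $X_s = \pi^{-1}(s)$ to itself, so $G$ acts on $X_s$ via $\restrict{\tau}{X_s}$; running the constructions \eqref{eq:11}--\eqref{eq:17} with $S$ replaced by a point then yields the (non-relative) $G$-Atiyah exact sequence
\[
0 \,\longrightarrow\, \ad{E^H_s}\, \stackrel{\iota_{0,s}}{\longrightarrow}\,
\text{At}^\tau(E^H_s) \, \stackrel{q_s}{\longrightarrow}\,  X_s \times \mathfrak{g} \,\longrightarrow\, 0
\]
of the principal $H$-bundle $E^H_s \to X_s$ (see \cite{BKN}, \cite{BP}), whose holomorphic splittings are by definition the holomorphic $G$-connections on $E^H_s$.

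The key step is the canonical identification $\restrict{(\text{At}_S^\tau(E^H))}{X_s} \cong \text{At}^\tau(E^H_s)$, compatible with $(\iota_0,q)$ and $(\iota_{0,s},q_s)$. For this I would start from the isomorphism $\restrict{(\text{At}_S(E^H))}{X_s} \cong \text{At}(E^H_s)$ already used in \eqref{eq:cd1}, together with the evident equality $\restrict{(d'\tau)}{X_s} = d'\tau_s : X_s \times \mathfrak{g} \to T_{X_s}$, which holds because $\restrict{T_{X/S}}{X_s} = T_{X_s}$ and $\tau$ restricts to the $G$-action on $X_s$. Consequently the bundle map $\mu$ of \eqref{eq:12} restricts to the surjection $\mu_s : \text{At}(E^H_s) \oplus (X_s \times \mathfrak{g}) \to T_{X_s}$ defining the fibrewise construction; since a surjection of holomorphic vector bundles is locally split, its kernel is a subbundle and restriction along $X_s \hookrightarrow X$ commutes with kernel formation, whence $\restrict{(\text{At}_S^\tau(E^H))}{X_s} = \restrict{\mu^{-1}(0)}{X_s} = \mu_s^{-1}(0) = \text{At}^\tau(E^H_s)$. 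The compatibility with $\iota_0$ and $q$ is immediate from \eqref{eq:15} and \eqref{eq:16}.

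With this in hand the proof is completed exactly as for Proposition~\ref{prop:1}. One has the commutative diagram of short exact sequences
\[
\xymatrix{
0 \ar[r] & \ad{E^H} \ar[d]^{r_s} \ar[r]^{\iota_0} & \text{At}_S^\tau(E^H) \ar[d]^{r_s} \ar[r]^{q} & X \times \mathfrak{g} \ar[d]^{r_s} \ar[r] & 0 \\
0 \ar[r] & \ad{E^H_s} \ar[r]^{\iota_{0,s}} & \text{At}^\tau(E^H_s) \ar[r]^{q_s} & X_s \times \mathfrak{g} \ar[r] & 0
}
\]
where $r_s$ denotes restriction to $X_s$. If $h : X \times \mathfrak{g} \to \text{At}_S^\tau(E^H)$ is a relative holomorphic $G$-connection, so that $q \circ h = \id{X \times \mathfrak{g}}$, then $h_s := \restrict{h}{X_s}$ is a holomorphic map $X_s \times \mathfrak{g} \to \text{At}^\tau(E^H_s)$ with $q_s \circ h_s = \restrict{(q \circ h)}{X_s} = \id{X_s \times \mathfrak{g}}$, i.e.\ a holomorphic $G$-connection on $E^H_s$; the family $\{h_s\}_{s \in S}$ is the asserted family.

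I expect the only genuine point to verify is the fibrewise compatibility in the second paragraph, namely that the subbundle $\text{At}_S^\tau(E^H) = \mu^{-1}(0) \subset \text{At}_S(E^H) \oplus (X\times\mathfrak{g})$ restricts over $X_s$ to the analogously defined subbundle $\mu_s^{-1}(0)$ attached to $E^H_s$. This is a local statement about kernels of surjective morphisms of vector bundles and is routine once the definitions are unwound; everything else parallels Proposition~\ref{prop:1} line by line.
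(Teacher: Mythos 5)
Your proposal is correct and follows essentially the same route as the paper: the paper's proof consists precisely of the commutative diagram you display, with the restriction maps $r_s$ relating the relative $G$-Atiyah sequence \eqref{eq:17} to the fibrewise $G$-Atiyah sequence of $E^H_s$, and the observation that a splitting of the top row restricts to a splitting of the bottom row. The only difference is that you spell out the identification $\restrict{(\text{At}_S^\tau(E^H))}{X_s} \cong \text{At}^\tau(E^H_s)$ via the compatibility of $\mu$ with $\mu_s$, which the paper leaves implicit.
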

 
 \begin{proof}
The proof is an easy consequence of the following commutative diagram
\begin{equation}
\label{eq:cd2}
\xymatrix{
0 \ar[r] & \ad{E^H} \ar[d]^{r_s} \ar[r]^{\iota_0} & \text{At}^\tau_S(E^H) 
\ar[d]^{r_s} \ar[r]^{q} & X \times \mathfrak{g} \ar[d]^{r_s} \ar[r] & 0 \\
0 \ar[r] & \ad{E^H_s} \ar[r]^{{\iota_0}_s} & \text{At}^\tau(E^H_s) \ar[r]^{q_s} & {X_s} \times \mathfrak{g} \ar[r] & 0 
}
\end{equation} 
where $r_s$ denotes the corresponding restriction map
for every $s \in S$,  and the bottom exact sequence is the Atiyah exact sequence of $E^H_s$ for the $G$ action on $X_s$  (see \cite{BP}).
The holomorphic splitting of the top exact sequence in 
\eqref{eq:cd2}
will induce a holomorphic splitting of the bottom exact 
sequence in \eqref{eq:cd2}. 
 \end{proof}
 
 \subsection{Sufficient condition for the existence of relative holomorphic connections}
 \label{suff_cond}
 We will give sufficient condition for the existence of
 relative holomorphic connections on $E^H$ and relative 
 holomorphic $G$-connections on $E^H$. In view of Proposition \ref{prop:1}, it is clear that a
 relative holomorphic connection on the principal $H$-bundle $E^H$ gives a family of 
 holomorphic connections. But the converse of Proposition \ref{prop:1} need not be true.

\begin{theorem}
\label{thm:1}
Let $E^H \,\stackrel{\varpi}{\longrightarrow}\, X$
be a holomorphic principal $H$-bundle. Suppose that for every
$s \in S$, there is a holomorphic connection on the principal $H$-bundle
$\varpi|_{E^H_s}\, :\, E^H_s \,\longrightarrow\, X_s$, and
$$\coh{1}{S}{\pi_*(\Omega^1_{X/S} \otimes \ad{E^H})} \,=\, 0\, .$$
Then, $E^H$ admits a relative holomorphic connection.
\end{theorem}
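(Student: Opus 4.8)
The plan is to reduce the claim, via Lemma~\ref{lem:1}, to the vanishing of the relative Atiyah class, and then to prove that vanishing by a Leray spectral sequence argument that uses both hypotheses. Write $\mathcal F\,:=\,\Omega^1_{X/S}\otimes\ad{E^H}$; by \eqref{eq:9.3} the relative Atiyah class $\at[S]{E^H}$ lies in $\coh{1}{X}{\mathcal F}$, and by Lemma~\ref{lem:1} it suffices to show $\at[S]{E^H}\,=\,0$.

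First I would feed $\mathcal F$ into the Leray spectral sequence $E_2^{p,q}\,=\,\coh{p}{S}{R^q\pi_*\mathcal F}\,\Longrightarrow\,\coh{p+q}{X}{\mathcal F}$ of the proper map $\pi$. Its exact sequence of terms of low degree reads
\begin{equation*}
0\,\longrightarrow\,\coh{1}{S}{\pi_*\mathcal F}\,\longrightarrow\,\coh{1}{X}{\mathcal F}\,\stackrel{\alpha}{\longrightarrow}\,\coh{0}{S}{R^1\pi_*\mathcal F}\,\longrightarrow\,\coh{2}{S}{\pi_*\mathcal F}\,,
\end{equation*}
and the hypothesis $\coh{1}{S}{\pi_*\mathcal F}\,=\,0$ forces the edge homomorphism $\alpha$ to be injective. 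So it is enough to prove that the section $\xi\,:=\,\alpha(\at[S]{E^H})$ of the coherent sheaf $R^1\pi_*\mathcal F$ (coherent by Grauert's direct image theorem, $\pi$ being proper) vanishes.

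Next I would evaluate $\xi$ fibrewise. Since $\pi$ is a submersion, $\mathcal F$ is flat over $S$ with $\restrict{\mathcal F}{X_s}\,\cong\,\Omega^1_{X_s}\otimes\ad{E^H_s}$, and there is a base-change homomorphism $(R^1\pi_*\mathcal F)\otimes\rfield{s}\,\longrightarrow\,\coh{1}{X_s}{\restrict{\mathcal F}{X_s}}$ compatible with $\alpha$ and with restriction of cohomology classes to $X_s$. On the other hand, the commutative diagram \eqref{eq:cd1} identifies the pullback to $X_s$ of the relative Atiyah sequence \eqref{eq:9} with the Atiyah sequence of $E^H_s$ over $X_s$; since an extension class is natural under pullback, the image of $\xi$ at $s$ under base change is the ordinary Atiyah class $\at{E^H_s}$. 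By hypothesis $E^H_s$ admits a holomorphic connection, hence $\at{E^H_s}\,=\,0$ (Atiyah--Weil theory, or Lemma~\ref{lem:1} applied to $E^H_s$ over $X_s$). Thus $\xi$ vanishes in every fibre $(R^1\pi_*\mathcal F)\otimes\rfield{s}$; once this is upgraded to $\xi\,=\,0$ as a global section, injectivity of $\alpha$ gives $\at[S]{E^H}\,=\,0$, and Lemma~\ref{lem:1} finishes the proof.

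The step I expect to be the main obstacle is precisely that upgrade, from fibrewise vanishing of $\xi$ to vanishing of $\xi$ as a global section of $R^1\pi_*\mathcal F$: a section of a coherent sheaf that dies in every fibre need not be zero, so one must call on Grauert's cohomology-and-base-change results --- together with upper semicontinuity of $s\mapsto\dim{\coh{1}{X_s}{\restrict{\mathcal F}{X_s}}}$ --- to control torsion and jumping in $R^1\pi_*\mathcal F$. An equivalent and perhaps more transparent formulation is to argue locally over $S$: for a Stein neighbourhood $V$ of a point of $S$ one has $\coh{1}{\pi^{-1}(V)}{\mathcal F}\,=\,\coh{0}{V}{R^1\pi_*\mathcal F}$ (Leray plus Cartan's Theorem~B for the coherent sheaf $\pi_*\mathcal F$ on $V$), so ``$\restrict{E^H}{\pi^{-1}(V)}$ admits a relative holomorphic connection'' is exactly the vanishing of $\restrict{\xi}{V}$; granting such local existence, the sheaf on $S$ whose sections over $V$ are the relative holomorphic connections of $\restrict{E^H}{\pi^{-1}(V)}$ is a torsor under $\pi_*\mathcal F$, its class lies in $\coh{1}{S}{\pi_*\mathcal F}\,=\,0$, and a global relative holomorphic connection exists. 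Either way, producing a relative connection locally over $S$ out of the given fibrewise connections is the crux; the remaining bookkeeping --- the Leray sequence, the identification of $\xi$ with $\at{E^H_s}$, and the torsor formalism --- is routine.
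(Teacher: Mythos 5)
Your proposal follows essentially the same route as the paper: both reduce to the vanishing of the relative Atiyah class, feed $\Omega^1_{X/S}\otimes\ad{E^H}$ into the five-term Leray exact sequence for $\pi$, use the hypothesis $\coh{1}{S}{\pi_*(\Omega^1_{X/S}\otimes\ad{E^H})}=0$ to make the edge map to $\coh{0}{S}{R^1\pi_*(\Omega^1_{X/S}\otimes\ad{E^H})}$ injective on the relevant class, and then kill the image there using the fibrewise connections. The step you single out as the main obstacle --- upgrading the vanishing of the Atiyah class of each $E^H_s$ to the vanishing of the section $q_1(\Phi(1))$ of the coherent sheaf $R^1\pi_*(\Omega^1_{X/S}\otimes\ad{E^H})$ --- is precisely the step the paper asserts in one line without further justification, so your insistence on base change (or the torsor reformulation over Stein neighbourhoods of $S$) is well placed rather than a deviation.
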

\begin{proof}
Consider the relative Atiyah exact sequence for the principal $H$-bundle in \eqref{eq:9}.  Tensoring it by
$\Omega^1_{X/S}$ produces the exact sequence
\begin{equation}\label{q}
0\,\longrightarrow\, \Omega^1_{X/S} \otimes \ad{E^H} \,\longrightarrow\, \Omega^1_{X/S} \otimes \text{At}_S(E^H)
\,\stackrel{q}{\longrightarrow}\,\Omega^1_{X/S}\otimes T_{X/S}
\,\longrightarrow\, 0\, ,
\end{equation}
where $q = \id{\Omega^1_{X/S}} \otimes \widetilde{d \varpi}$.

Note that $\struct{X}\cdot \id{T_{X/S}} \, \subset\, \text{End}(T_{X/S})\,=\,
\Omega^1_{X/S}\otimes T_{X/S}$. Define
$$
\Omega^1_{X/S} (\text{At}'_S(E^H))\, :=\, q^{-1}(\struct{X}\cdot \id{T_{X/S}})\, \subset\,
\Omega^1_{X/S}\otimes \text{At}_S(E^H)\, ,
$$
where $q$ is the projection in \eqref{q}. So we have the short exact sequence of sheaves
\begin{equation}\label{q2}
0\,\longrightarrow\, \Omega^1_{X/S}\otimes \ad{E^H}\,\longrightarrow\, \Omega^1_{X/S}(\text{At}'_S(E^H))
\,\stackrel{q}{\longrightarrow}\,\struct{X}\,\longrightarrow\, 0
\end{equation}
on $X$, where $\Omega^1_{X/S}(\text{At}'_S(E^H))$ is constructed above. Let
\begin{equation}\label{q3}
\Phi\, : \, {\rm H}^0(X,\, \struct{X})\, \longrightarrow\,
{\rm H}^1(X,\, \Omega^1_{X/S} \otimes \ad{E^H})
\end{equation}
be the connecting homomorphism in the long exact sequence of cohomologies associated to the exact sequence
in \eqref{q2}. The relative Atiyah class $\at[S]{E^H}$ (see \eqref{eq:9.3}) coincides
with $\Phi(1)\, \in\, {\rm H}^1(X,\, \Omega^1_{X/S} \otimes \ad{E^H})$. Therefore, from
Lemma \ref{lem:1} it follows that 
$E^H$ admits a relative holomorphic connection if and only if
\begin{equation}\label{q4}
\Phi(1)\, =\,0\, .
\end{equation}

To prove the vanishing statement in \eqref{q4}, first note that
${\rm H}^1(X,\, \Omega^1_{X/S}\otimes \ad{E^H})$ fits in the five terms exact sequence (Leray spectral sequence in low degrees)
\begin{equation*}
0 \to {\rm H}^1(S,\, \pi_*(\Omega^1_{X/S} \otimes \ad{E^H} ))\, \stackrel{\beta_1}{\longrightarrow}\,
{\rm H}^1(X,\, \Omega^1_{X/S} \otimes \ad{E^H})\,
\end{equation*}
\begin{equation}
\label{q5}
 \stackrel{q_1}{\longrightarrow}\,
{\rm H}^0(S,\, R^1\pi_*(\Omega^1_{X/S} \otimes \ad{E^H}))\, \stackrel{q_2}{\longrightarrow}\, \coh{2}{S}{ \pi_* \Omega^1_{X/S} \otimes \ad{E^H}} \stackrel{q_3}{\longrightarrow}\, \coh{2}{X}{\Omega^1_{X/S} \otimes \ad{E^H}},
\end{equation}
where $\pi$ is the projection of $X$ to $S$.
We use only first three terms in the above  exact 
sequence.

The given condition that for every
$s \in S$, there is a holomorphic connection on the holomorphic principal $H$-bundle
$\varpi|_{E^H_s}\, :\, E^H_s \,\longrightarrow\, X_s$, implies that
$$
q_1(\Phi(1))\,=\, 0\, ,
$$
where $q_1$ is the homomorphism in \eqref{q5}. Therefore, from the exact sequence in
\eqref{q5} we conclude that
$$
\Phi(1)\, \in\, \beta_1({\rm H}^1(S,\, \pi_*(\Omega^1_{X/S} \otimes \ad{E^H})))\, .
$$
Finally, the given condition that ${\rm H}^1(S,\, \pi_*(\Omega^1_{X/S} \otimes \ad{E^H}))\,=\, 0$
implies that $\Phi(1)\, =\, 0$. Since \eqref{q4} holds, the principal $H$-bundle 
$E^H$ admits a relative holomorphic connection.
\end{proof}

Next, consider the case of relative holomorphic $G$-connections on principal $H$-bundle $E^H \xrightarrow{\varpi} X \xrightarrow{\pi} S$. Under the assumption that the holomorphic map $\pi : X \to S$ is $G$-invariant, from Proposition \ref{prop:2},  a relative holomorphic $G$-connection
on $E^H$
gives a family of holomorphic $G$-connections on 
$\{E^H_s\}_{s \in S}$. Again, the converse of the Proposition \ref{prop:2} need not be true.

\begin{theorem}
\label{thm:2}
Let $E^H \,\stackrel{\varpi}{\longrightarrow}\, X \xrightarrow{\pi} S$
be a holomorphic principal $H$-bundle. Suppose that 
$\pi : X \to S$ is $G$-invariant. 
Let $V$ denote the trivial vector bundle $X \times \mathfrak{g}$ over $X$,  where $\mathfrak{g}$ is the Lie algebra of $G$.  Suppose that for every
$s \in S$, there is a holomorphic $G$-connection on the principal $H$-bundle
$\varpi|_{E^H_s}\, :\, E^H_s \,\longrightarrow\, X_s$, and
$$\coh{1}{S}{\pi_*(V^* \otimes\ad{E^H})} \,=\, 0\, ,$$
where $V^*$ denotes the dual of $V$.
Then, $E^H$ admits a relative holomorphic $G$-connection.
\end{theorem}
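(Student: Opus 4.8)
The plan is to transcribe, with minor changes, the proof of Theorem \ref{thm:1}, replacing the relative Atiyah sequence \eqref{eq:9} by the sequence \eqref{eq:17} and the relative cotangent bundle $\Omega^1_{X/S}$ by the dual $V^*$ of the trivial bundle $V = X \times \mathfrak{g}$. Write $\mathcal{A} := V^* \otimes \ad{E^H}$. Tensoring \eqref{eq:17} with $V^*$ produces the short exact sequence
$$0 \,\longrightarrow\, \mathcal{A} \,\longrightarrow\, V^* \otimes \text{At}_S^\tau(E^H) \,\stackrel{q'}{\longrightarrow}\, V^* \otimes V \,\longrightarrow\, 0 ,$$
where $q' = \id{V^*}\otimes q$. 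Since $V$ is trivial, $V^* \otimes V = \END[\struct{X}]{V}$ contains the trivial line subbundle $\struct{X}\cdot\id{V}$; set $\mathcal{F} := (q')^{-1}(\struct{X}\cdot\id{V}) \,\subset\, V^* \otimes \text{At}_S^\tau(E^H)$, which fits into a short exact sequence
$$0 \,\longrightarrow\, \mathcal{A} \,\longrightarrow\, \mathcal{F} \,\stackrel{q'}{\longrightarrow}\, \struct{X} \,\longrightarrow\, 0$$
of sheaves on $X$. Letting $\Phi \colon \coh{0}{X}{\struct{X}} \longrightarrow \coh{1}{X}{\mathcal{A}}$ be the connecting homomorphism of the associated long exact sequence, a diagram chase (identical to the one in the absolute case) identifies $\Phi(1)$ with the relative $G$-Atiyah class $\at[S]{E^H}_\tau$. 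By Lemma \ref{lem:2} it therefore suffices to prove $\Phi(1) = 0$.

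For the vanishing I would use the five-term exact sequence of the Leray spectral sequence for $\pi$,
$$0 \,\longrightarrow\, \coh{1}{S}{\pi_*\mathcal{A}} \,\stackrel{\beta_1}{\longrightarrow}\, \coh{1}{X}{\mathcal{A}} \,\stackrel{q_1}{\longrightarrow}\, \coh{0}{S}{R^1\pi_*\mathcal{A}} \,\longrightarrow\, \cdots .$$
The hypothesis that each $E^H_s$ admits a holomorphic $G$-connection means, via the commutative diagram \eqref{eq:cd2} together with the identification of the fiberwise restriction of $\at[S]{E^H}_\tau$ with the extension class of the bottom row of \eqref{eq:cd2}, which vanishes by hypothesis (cf.\ \cite{BP}), that $q_1(\Phi(1)) = 0$. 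Hence $\Phi(1)$ lies in the image of $\beta_1$, and the hypothesis $\coh{1}{S}{\pi_*\mathcal{A}} = 0$ forces $\Phi(1) = 0$. By Lemma \ref{lem:2}, $E^H$ then admits a relative holomorphic $G$-connection.

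The main obstacle I anticipate is making precise that $q_1(\Phi(1))$ is the section of $R^1\pi_*\mathcal{A}$ whose value at each $s \in S$ is the $G$-Atiyah class of $E^H_s$ over $X_s$. This needs, first, the compatibility of the construction \eqref{eq:14} of $\text{At}_S^\tau(E^H)$ with restriction to the fibers $X_s$, so that \eqref{eq:cd2} is genuinely the fiberwise version of \eqref{eq:17} after tensoring by $V^*$; and second, a base-change statement identifying the fiber of $R^1\pi_*\mathcal{A}$ at $s$ with $\coh{1}{X_s}{\mathcal{A}|_{X_s}}$, which is available since $\pi$ is proper. With these in hand the remainder is a routine transcription of the proof of Theorem \ref{thm:1}.

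Finally, it is worth recording that although $d'\tau$ in \eqref{eq:11} need not be surjective or of constant rank (cf.\ Remark \ref{conn-vs-G-conn}), this causes no trouble: the map $\mu$ of \eqref{eq:12} is surjective because $\widetilde{d\varpi}$ is, so \eqref{eq:17} is an honest short exact sequence of holomorphic vector bundles and all the sheaf-theoretic steps above go through without modification.
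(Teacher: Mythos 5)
Your proposal is correct and is precisely what the paper intends: its proof of Theorem \ref{thm:2} consists of the single sentence that the argument is ``exactly similar to the proof of Theorem \ref{thm:1}'', and you have carried out exactly that transcription, replacing \eqref{eq:9} by \eqref{eq:17} and $\Omega^1_{X/S}$ by $V^*$, with the same Leray five-term argument. The points you flag (fiberwise compatibility of \eqref{eq:17} via \eqref{eq:cd2}, base change for $R^1\pi_*$, and the surjectivity of $\mu$ despite $d'\tau$ possibly not being surjective) are the right ones to check and all hold as you say.
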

\begin{proof}
The proof is exactly similar to the proof of Theorem \ref{thm:1}. 
\end{proof}

\begin{example}\label{exmp:Stein}
Let $S$ be a Stein manifold. Let $\pi : X \to S$ be an analytic family of compact connected Riemann surfaces. Let $H$ be a connected reductive linear algebraic group over $\C$.
Let $E^H$ be a holomorphic principal $H$-bundle over $X$.
Suppose that for every $s \in S$, the principal $H$-bundle 
$E^H_s \to X_s$ admits a holomorphic connection (see \cite[Theorem 4.1]{AB} for the criterion of existence of holomorphic connection on $E^H_s$).  Since $S$ is Stein, and
$\pi_* (\Omega^1_{X/S} \otimes  \ad{E^H})$ is a coherent analytic sheaf, we have $\coh{1}{S}{\pi_*(\Omega^1_{X/S} \otimes \ad{E^H})} \,=\, 0\, .$ Hence by Theorem \ref{thm:1} , $E^H$ admits a relative holomorphic connection.

A similar reasoning can be given for the existence of relative holomorphic $G$-connection on $E^H$. In particular, let $G$ be the complex torus of dimension two, $X_s$ be a fixed smooth toric surface under $G$-action,  and $H =GL(r, \mathbb{C})$. Then it follows from \cite[Section 2.3, Example 4]{Kly} that $G$-equivariant principal $H$-bundles over $X_s$ are classified by families of filtrations of $\mathbb{C}^r$, each family being indexed by the torus invariant divisors of $X_s$. One can take $S$ to be a  suitable parameter space of such families isomorphic to the affine space with trivial $G$-action, and 
$X= X_s \times S$. This produces a nontrivial family $E^H$  of $G$-equivariant principal $H$-bundles over $X/S$. By Theorem \ref{thm:2}, and \cite[Theorem 3.1]{BKN} or \cite[Lemma 4.1]{BP}, $E^H$ admits a relative $G$-connection. 
\end{example}

\begin{example} 
\label{exmp:rel}
We give an example of existence of relative holomorphic $G$-connection on $E^H$, where the complex Lie group $H$ is abelian.
Let $F$ be a rank $n+1$ holomorphic vector bundle over  $\p^k$.
Then, in the notations of Theorem \ref{thm:2}, 
$$\pi : X = \p (F) \to \p^k = S$$ is a holomorphic flat morphism, which is a $\p^n$-bundle. Assume that the complex Lie group $H$ is abelian. Then, the adjoint vector bundle $\ad{E^H}$ is a trivial vector bundle over 
$X = \p (F)$.
For every $s \in S = \p^k$, $E^H_s$ admits a holomorphic $G$-connection (follows from the criterion \cite[Lemma 2.2]{BP}), because (see \cite[p.~5]{OSS}) $$\coh{1}{\p^n}{\ad{E^H_s}} = 0,$$
which follows from the fact that 
$\ad{E^H_s}$ is trivial.

Next, since $\pi$ is flat, the sheaf $\pi_*(V^* \otimes \ad{E^H})$ is a trivial vector bundle over $\p^k$,
and hence we get 
$$\coh{1}{\p^k}{\pi_*(V^* \otimes \ad{E^H})} = 0.$$
Therefore, from Theorem \ref{thm:2}, $E^H$ admits a relative holomorphic $G$-connection.

\end{example}

 \subsection{Curvature of relative holomorphic $G$-connection} 
 \label{curvature}
 Let $E^H \,\stackrel{\varpi}{\longrightarrow}\, X \xrightarrow{\pi} S$
be a holomorphic principal $H$-bundle and as in section 
\ref{group_action}, $G$ acts on $X$ and $S$ such that 
$\pi$ is $G$-invariant.
 Note that the sheaf of holomorphic sections of the vector bundle $\text{At}_S(E^H)$ has the Lie algebra structure.  Therefore, we get a Lie algebra structure 
 on the sheaf of holomorphic sections of the vector bundle $\text{At}^\tau_{S}(E^H)$, because 
 $$\text{At}_S^\tau(E^H) := \mu^{-1}(0) \subset \text{At}_S(E^H) \oplus (X \times \mathfrak{g})$$
 and $\mathfrak{g}$ is the Lie algebra of $G$.
Also, the morphisms $\iota_0$ and $q$ in \eqref{eq:17}
are compatible with the Lie bracket operations on the
sections of $\ad{E^H}$ and $\text{At}^{\tau}_S(E^H)$,
respectively.

 Let $\nabla : X \times \mathfrak{g} \longrightarrow \text{At}^\tau_S(E^H)$
 be a splitting of the short exact sequence in \eqref{eq:17}, that is, $\nabla$ is a relative holomorphic $G$-connection on $E^H$.
 Let $U \subset X$ be an open subset and  let $\alpha$ and 
 $\beta$ be  any two sections of $X \times \mathfrak{g}$
 over $U$. Consider 
 $$\cat{R}(\nabla) (\alpha, \beta) := [\nabla(\alpha), \nabla(\beta)] - \nabla ([\alpha, \beta]) \in \Gamma (U, \text{At}^\tau_S(E^H)) .$$
 Note that  $q (\cat{R}(\nabla)(\alpha, \beta)) = 0$,
 because $q$ in \eqref{eq:17} is compatible with the 
 Lie algebra structures.
 Hence $\cat{R}(\nabla) (\alpha, \beta)$ lies in the image of $\ad{E^H}$ over $U$.
 We also have following equalities
 \begin{enumerate}
 \item $\cat{R}(\nabla)(f \alpha, \beta) = f \cat{R}(\nabla)(\alpha, \beta),$ where $f$ is a holomorphic function on $U$.
 \item $\cat{R}(\nabla)(\alpha, \beta) = - \cat{R}(\nabla)(\beta, \alpha).$
\end{enumerate}  
Altogether, we get that 
\begin{equation}
\label{eq:20}
\cat{R}(\nabla) \in \coh{0}{X}{\ad{E^H} \otimes \bigwedge^2V^*} = \coh{0}{X}{\ad{E^H}} \otimes \bigwedge^2 \mathfrak{g}^*,
\end{equation}
where $V = X \times \mathfrak{g}$.
 
 The section $\cat{R}(\nabla)$ is called the {\it relative curvature} of the relative holomorphic $G$-connection $\nabla$ on 
 $E^H$.  
 
 Now, we describe the induced relative  connection
 and curvature for the holomorphic homomorphism of
 complex Lie groups.
 
 Let $\phi : G_1 \longrightarrow G $ be a holomorphic 
 homomorphism of complex Lie groups. Then, $G_1$ acts on 
 $X$ as follows
 \begin{equation}
 \label{eq:21}
 \tau_1 : G_1 \times X \longrightarrow  X, \,\,\,\, (g, x) \mapsto \tau (\phi(g), x)
\end{equation}   
where $\tau$ is the holomorphic action of $G$ on $X$ in \eqref{eq:10}.
Let $\mathfrak{g}_1$ denote the Lie algebra of $G_1$.
The differential of the morphism $\phi$ gives a homomorphism 
\begin{equation}
\label{eq:22}
d \phi : \mathfrak{g}_1 \longrightarrow \mathfrak{g}
\end{equation}
of Lie algebras. 

Note that for $G$ action on $X$ and $S$ such that 
$\pi: X \to S$  is $G$-invariant, the action $\tau_1$ in \eqref{eq:21} induces an action of $G_1$ on $S$ such that 
$\pi$ is $G_1$-invariant.  
Hence, we have a relative  Atiyah bundle $\text{At}^{\tau_1}_S(E^H)$ over $X$ as in \eqref{eq:14}. Since the action $G_1$ on $X$ and $S$ are given in terms of action of $G$ using the map 
$\phi$,  from the construction of $\text{At}^{\tau}_S(E^H)$ we have
\begin{equation}
\label{eq:23}
\text{At}^{\tau_1}_S(E^H) \, =\, \{ (u, v) \in \text{At}^\tau_S(E^H) \oplus (X \times \mathfrak{g}_1) \,\, \vert \,\, q(u) = (\id{X} \times d \phi)(v) \},
\end{equation}
where $q$ is given in \eqref{eq:17}.

An easy observation is stated as follows.

\begin{proposition}
\label{prop:3}
A relative holomorphic $G$-connection $\nabla$ on $E^H$
induces a relative holomorphic $G_1$-connection $\nabla_1$ on $E^H$. The relative curvature $\cat{R}(\nabla_1)$ coincides with the image of $\cat{R}(\nabla)$ under the homomorphism 
\begin{equation}
\label{eq:24}
\coh{0}{X}{\ad{E^H}} \otimes \bigwedge^2 \mathfrak{g}^*
\longrightarrow \coh{0}{X}{\ad{E^H}} \otimes \bigwedge^2 \mathfrak{g}_1^*
\end{equation}
induced by the dual homomorphism 
$(d \phi)^* : \mathfrak{g}^* \longrightarrow \mathfrak{g}^*_1$.
\end{proposition}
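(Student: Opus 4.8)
The plan is to construct the induced $G_1$-connection explicitly from the given $G$-connection and then verify the curvature statement by an essentially formal computation using the explicit description of $\text{At}^{\tau_1}_S(E^H)$ in \eqref{eq:23}. First I would build a morphism of short exact sequences relating \eqref{eq:17} for $G$ and for $G_1$. The homomorphism $d\phi : \mathfrak{g}_1 \to \mathfrak{g}$ gives a bundle map $\id{X} \times d\phi : X \times \mathfrak{g}_1 \to X \times \mathfrak{g}$; pairing this with the identity on $\ad{E^H}$, one checks, using \eqref{eq:23}, that there is a natural bundle map $\psi : \text{At}^{\tau_1}_S(E^H) \to \text{At}^{\tau}_S(E^H)$, $(u,v) \mapsto u$ (note $q(u) = (\id{X}\times d\phi)(v)$ lands in the image we need), fitting into a commutative diagram whose rows are \eqref{eq:17} for $G_1$ (top) and $G$ (bottom), with the left vertical map $\id{\ad{E^H}}$, the middle $\psi$, and the right $\id{X}\times d\phi$. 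Given a splitting $\nabla$ of the bottom row, I would define $\nabla_1 : X \times \mathfrak{g}_1 \to \text{At}^{\tau_1}_S(E^H)$ by $\nabla_1(v) := (\nabla((\id{X}\times d\phi)(v)),\, v)$; one verifies this lands in $\text{At}^{\tau_1}_S(E^H)$ because $q(\nabla((\id{X}\times d\phi)(v))) = (\id{X}\times d\phi)(v)$ since $\nabla$ splits $q$, and that $q_1 \circ \nabla_1 = \id{}$ where $q_1$ is the projection to $X \times \mathfrak{g}_1$. This gives the relative holomorphic $G_1$-connection.

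Next I would compute the curvature. Since the sheaf of sections of $\text{At}^{\tau_1}_S(E^H)$ carries its Lie algebra structure as a subsheaf of sections of $\text{At}_S(E^H) \oplus (X\times\mathfrak{g}_1)$ (exactly as in Section \ref{curvature} for $\tau$), and $\psi$ together with the bracket on $\mathfrak{g}_1$ versus $\mathfrak{g}$ are compatible with these structures — because $d\phi$ is a Lie algebra homomorphism — the map $\psi$ is a Lie algebra morphism on sections. For constant sections $\alpha,\beta \in \mathfrak{g}_1$ (so $[\alpha,\beta]$ is again constant), one has
\[
\cat{R}(\nabla_1)(\alpha,\beta) = [\nabla_1(\alpha),\nabla_1(\beta)] - \nabla_1([\alpha,\beta]),
\]
and applying $\psi$, which kills nothing on the $\ad{E^H}$-part (the left vertical arrow is the identity) and intertwines $\nabla_1$ with $\nabla\circ(\id{X}\times d\phi)$, gives
\[
\cat{R}(\nabla_1)(\alpha,\beta) = [\nabla(d\phi(\alpha)),\nabla(d\phi(\beta))] - \nabla([d\phi(\alpha),d\phi(\beta)]) = \cat{R}(\nabla)(d\phi(\alpha),d\phi(\beta)),
\]
where the middle equality uses that $d\phi$ is a Lie algebra homomorphism so $d\phi([\alpha,\beta]) = [d\phi(\alpha),d\phi(\beta)]$. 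Under the identification \eqref{eq:20}, viewing $\cat{R}(\nabla)$ as an element of $\coh{0}{X}{\ad{E^H}} \otimes \bigwedge^2\mathfrak{g}^*$ and $\cat{R}(\nabla_1)$ in $\coh{0}{X}{\ad{E^H}} \otimes \bigwedge^2\mathfrak{g}_1^*$, the identity just proved says precisely that $\cat{R}(\nabla_1)$ is the image of $\cat{R}(\nabla)$ under $\id{} \otimes \bigwedge^2(d\phi)^*$, which is the map \eqref{eq:24}.

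The only genuine point requiring care — the ``main obstacle'', though it is modest — is checking that $\psi$ is well defined and a morphism of Lie algebras, i.e.\ that the bracket on sections of $\text{At}^{\tau_1}_S(E^H)$ really is the restriction of the one on $\text{At}_S(E^H)\oplus(X\times\mathfrak{g}_1)$ and that $\psi$ respects it. This follows from unwinding the defining formula \eqref{eq:23} and the fact that the bracket on the trivial bundle $X\times\mathfrak{g}_1$ is the pointwise Lie bracket of $\mathfrak{g}_1$, which $d\phi$ carries to that of $\mathfrak{g}$; everything else is bookkeeping with the commutative diagram. I would also note that it suffices to verify the curvature identity on constant sections since both sides are $\struct{X}$-bilinear and alternating (by the two displayed properties in Section \ref{curvature}), and constant sections generate $X\times\mathfrak{g}_1$ over $\struct{X}$.
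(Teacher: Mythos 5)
Your proof is correct. The paper supplies no argument for this proposition (it is introduced only as ``an easy observation''), and your construction --- defining $\nabla_1(v) = (\nabla((\id{X}\times d\phi)(v)),\,v)$ via the description \eqref{eq:23}, checking that the forgetful map $\psi$ is a morphism of Lie algebras on sections restricting to the identity on $\ad{E^H}$, and reducing the curvature identity to constant sections by $\struct{X}$-bilinearity --- is exactly the intended argument, with the details the authors omitted.
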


\section{Relative connection and lifting of an action}
\label{lift}

As in the previous section, let $G$ be a complex Lie group,
 $\pi : X \longrightarrow S$  equipped with the $G$-equivariant action, and $\varpi : E^H \to X $ be the family of principal $H$-bundles parametrised  by $S$.
 
 Henceforth, we will also assume that
 $X$ is compact.

Let  $\text{Aut}(E^H)$ be the group of all automorphisms
of $E^H$ over the identity map of $X/S$.
Because of the commutativity of the desired diagram,
given any $\theta \in \text{Aut}(E^H)$, we have an automorphism 
$\theta_s : E^H_s \to E^H_s$ over the identity map of $X_s$ for every $s \in S$.

Let 
\begin{equation}
\label{eq:24.1}
\nu : G \times S \to S
\end{equation}
denote the action of $G$ on $S$ such that for any $g \in G$
we have 
$$\pi \circ \tau_g = \nu_g \circ \pi,$$
where $\nu_g : S \longrightarrow S$ is an automorphism.
Then for every $s \in S$, we have an isomorphism 
\begin{equation}
\label{eq:27}
\tau_g : X_s \longrightarrow X_{\nu_g(s)}.
\end{equation}

Let $\text{Aut}_S(E^H)$ be the set of relative automorphisms, that is, $\text{Aut}_S(E_H)$ consists of those holomorphism automorphisms $\theta : E^H \longrightarrow E^H$ such that for every $s \in S$, 
$\theta_s : E^H_s \to E^H_{\nu_g(s)}$ is an isomorphism
over $\tau_g : X_s \to X_{\nu_g(s)}$. 

%Now, we define the notion of $S$-automorphisms of $E^H$.
%Suppose that we  are given an automorphism $\theta_s : E^H_s \to E^H_s$
%over the identity of $X_s$ for every $s \in S$.
%Using  the family $\{\theta_s\}_{s \in S}$, we can define a bijective map $\theta$ on $E^H$ which preserves the action of $H$ on $E^H$ such that the restriction $\theta \vert_{E^H_s} $ is $\theta_s$. 
% This $\theta$ is said to be an $S$-automorphism.
%Let $\text{Aut}_S(E^H)$ be the group of $S$-automorphism of $E^H$. Then,
%\begin{equation*}
%\label{eq:25}
%\text{Aut}(E^H) \subset \text{Aut}_S(E^H).
%\end{equation*}
%
%I have one doubt as follows.
%
%Under the assumption on $\pi : X \to S$ that is 
%$\pi : X \to S$ is a surjective holomorphic proper 
%submersion it may happen that the two groups become 
%equal $$\text{Aut}(E^H) = \text{Aut}_S(E^H) ??$$ 

For any $g \in G$, we have an automorphism
\begin{equation}
\label{eq:26}
\tau_g : X \to X.
\end{equation}

Let 
\begin{equation}
\label{eq:28}
G_S \subset G
\end{equation}
be the subset consisting of all $g \in G$ such that for 
every $s \in S$ the pulled back principal $H$-bundle 
$\tau_g^* E^H_{\nu_g(s)}$ is isomorphic to $E^H_s$.

Note that if $\pi : X \to S$ is $G$-invariant, then 
for every $s \in S$, we get a subset $G_s$ of $G$,
consisting of all $g \in G$ such that $\tau^*_g E^H_s 
\cong  E^H_s$.

Let $\cat{G}_S$ denote the space of all pairs of the form 
$(\theta, g)$ where $g \in G_S$, and 
$\theta : E^H \longrightarrow E^H$ is a holomorphic automorphism such that for every $s \in S$, 
$\theta_s : E^H_s \to E^H_{\nu_g(s)}$ is an isomorphism
over $\tau_g : X_s \to X_{\nu_g(s)}$. 

Again, if $\pi : X \to S$ is $G$-invariant, then for 
every $s \in S$, we get a space $\cat{G}_s$ consisting of all pairs of the form $(\theta, g)$, where $g \in G_s$ and $\theta : E^H_s \to E^H_s$ is a holomorphic automorphism over the automorphism $\tau_g : X_s \to X_s$.

Note that $\cat{G}_S$ is equipped with the group operation defined as follows
\begin{equation}
\label{eq:29}
(\theta', g') \cdot (\theta, g) = (\theta' \circ \theta, g' g)
\end{equation}
while the inverse is the map 
$(\theta, g) \mapsto (\theta^{-1}, g^{-1})$.
Thus, $\cat{G}_S$ fits into the following short exact sequence of groups
\begin{equation}
\label{eq:30}
0 \to \text{Aut}(E^H) \,\xrightarrow{\alpha}\, \cat{G}_S \xrightarrow{\beta} G_S \to 0,
\end{equation}
where $\beta (\theta, g) = g$ and $\alpha(\theta) = (\theta, e)$, where $e$ is the identity of $G_S$.

There is a complex Lie group structure on $\cat{G}_S$
which is uniquely determined by the condition that 
\eqref{eq:30} is a sequence of complex Lie groups.

We already seen that the sheaf of sections of $\text{At}_S^{\tau}(E^H)$ admits a Lie algebra structure, and hence induces a Lie algebra structure on $\coh{0}{X}{\text{At}_S^\tau(E^H)}$.

\begin{proposition}
\label{prop:4} Suppose that $\pi : X \to S$ is $G$-invariant. Then,
the Lie algebra of $\cat{G}_S$ is canonically identified  with the above Lie algebra $\coh{0}{X}{\text{At}_S^\tau(E^H)}$.
\end{proposition}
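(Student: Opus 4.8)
The plan is to differentiate the tautological action of $\cat{G}_S$ on $E^H$, obtaining a natural map
\[
\Psi\colon\ \Lie{\cat{G}_S}\ \longrightarrow\ \coh{0}{X}{\text{At}_S^\tau(E^H)},
\]
and then to show that $\Psi$ is an isomorphism of Lie algebras by comparing it with two exact sequences: the one obtained by applying the Lie functor to \eqref{eq:30}, and the one obtained by applying $\coh{0}{X}{-}$ to the relative $G$-Atiyah sequence \eqref{eq:17}.

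First I would record the two sequences. Since $X$ is compact, $\text{Aut}(E^H)$ is the holomorphic gauge group of $E^H\to X$, a finite-dimensional complex Lie group with Lie algebra canonically $\coh{0}{X}{\ad{E^H}}$: a one-parameter family of gauge transformations differentiates to an $H$-invariant section of $T_{E^H/X}$, i.e.\ a section of $\ad{E^H}$ via \eqref{eq:7.1} and \eqref{eq:8}, and conversely every such section exponentiates because the induced fibrewise ODE on $H$ is complete. As \eqref{eq:30} is a short exact sequence of complex Lie groups, the Lie functor yields the exact sequence of Lie algebras
\begin{equation*}
0 \longrightarrow \coh{0}{X}{\ad{E^H}} \xrightarrow{\,d\alpha\,} \Lie{\cat{G}_S} \xrightarrow{\,d\beta\,} \Lie{G_S} \longrightarrow 0,
\end{equation*}
with $\Lie{G_S}\hookrightarrow\mathfrak{g}$ since $G_S\subset G$ is a complex Lie subgroup. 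On the other hand, applying $\coh{0}{X}{-}$ to \eqref{eq:17} and using that $X$ is compact and connected (so $\coh{0}{X}{X\times\mathfrak{g}}=\mathfrak{g}$) gives the left exact sequence
\begin{equation*}
0 \longrightarrow \coh{0}{X}{\ad{E^H}} \xrightarrow{\,(\iota_0)_*\,} \coh{0}{X}{\text{At}_S^\tau(E^H)} \xrightarrow{\,q_*\,} \mathfrak{g}.
\end{equation*}

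Now I would construct $\Psi$. Represent $\zeta\in\Lie{\cat{G}_S}$ by a curve $t\mapsto(\theta_t,g_t)$ through $(\id{E^H},e)$. Then $\zeta^{\#}:=\frac{d}{dt}\big|_{0}\theta_t$ is a holomorphic vector field on $E^H$ that is $H$-invariant and (as $\pi$ is $G$-invariant) tangent to the fibres of $\pi\circ\varpi$, hence a section of $\text{At}_S(E^H)$; since $\theta_t$ covers $\tau_{g_t}$, differentiating $\varpi\circ\theta_t=\tau_{g_t}\circ\varpi$ at $t=0$ gives $\widetilde{d\varpi}(\zeta^{\#})=d'\tau(d\beta(\zeta))$, i.e.\ $\mu(\zeta^{\#},d\beta(\zeta))=0$, so $\Psi(\zeta):=(\zeta^{\#},d\beta(\zeta))$ lies in $\coh{0}{X}{\text{At}_S^\tau(E^H)}$. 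As the Lie brackets on $\coh{0}{X}{\text{At}_S(E^H)}$ and on $\Lie{\cat{G}_S}$ are both induced by the bracket of vector fields on $E^H$ through the respective tautological actions, $\Psi$ is a homomorphism of Lie algebras (after fixing the sign convention for the Atiyah bracket accordingly). By construction $\Psi$ restricts to the identity of $\coh{0}{X}{\ad{E^H}}$ and satisfies $q_*\circ\Psi=d\beta$ (under $\Lie{G_S}\hookrightarrow\mathfrak{g}$), so $\Psi$ is a morphism from the first sequence to the second.

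It then remains to pass from injectivity to bijectivity by a diagram chase. Commutativity of the ladder immediately forces $\Psi$ to be injective, with image exactly $q_{*}^{-1}(\Lie{G_S})$, so the proof reduces to showing $\mathrm{Im}(q_*)=\Lie{G_S}$. The inclusion $\Lie{G_S}\subseteq\mathrm{Im}(q_*)$ is clear from $q_*\circ\Psi=d\beta$ and surjectivity of $d\beta$. For the reverse inclusion, take a section $(\tilde\xi,\xi)$ of $\text{At}_S^\tau(E^H)$: the $H$-invariant holomorphic vector field $\tilde\xi$ on $E^H$ projects to the complete vector field $d'\tau(\xi)$ on $X$, and $H$-invariance together with compactness of $X$ ensures that its flow $\Theta_t$ is complete (in a local trivialisation the fibre component is governed by a right-invariant, boundedly time-dependent ODE on $H$ along the relatively compact base trajectory). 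Each $\Theta_t$ is an $H$-equivariant automorphism of $E^H$ covering $\tau_{\exp(t\xi)}$, hence restricts (since $\pi$ is $G$-invariant) to an isomorphism $E^H_s\xrightarrow{\sim}E^H_s$ over $\tau_{\exp(t\xi)}\vert_{X_s}$ for every $s$; thus $\tau_{\exp(t\xi)}^{*}E^H_s\cong E^H_s$ for all $s$, i.e.\ $\exp(t\xi)\in G_S$, and differentiating at $t=0$ gives $\xi\in\Lie{G_S}$. Now the ladder is a morphism of short exact sequences with the identity on both ends, and the five lemma shows $\Psi$ is an isomorphism of Lie algebras, which is the desired canonical identification. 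I expect this last integrability step to be the main obstacle: showing that a global section of $\text{At}_S^\tau(E^H)$ (an infinitesimal lift) actually exponentiates to a one-parameter subgroup of $\cat{G}_S$, where the compactness of $X$ is crucial to prevent the flow from running off to infinity along the non-compact fibre $H$.
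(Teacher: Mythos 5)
Your proposal is correct and follows essentially the same route as the paper: differentiate the tautological $\cat{G}_S$-action to get $\zeta\mapsto(\zeta^{\#},d\beta(\zeta))$ landing in $\coh{0}{X}{\text{At}_S^\tau(E^H)}$, check injectivity against the exact sequence \eqref{eq:30}, and use compactness of $X$ for surjectivity. The only difference is that you spell out the integrability step (exponentiating a global section of $\text{At}_S^\tau(E^H)$ to a flow in $\cat{G}_S$), which the paper delegates to the analogous argument in \cite[Proposition 3.1]{BP}.
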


\begin{proof}
Let $\mathfrak{g}_S$ denote the Lie algebra of $\cat{G}_S$. We shall produce a natural homomorphism 
from $\mathfrak{g}_S$ to $\coh{0}{X}{\text{At}_S^\tau(E^H)}$. 
 Note that the group $\cat{G}_S$ acts on $E^H$ naturally which commutes with the action $H$ on $E^H$. In fact, this action gives a map 
 from $\cat{G}_S$ to $\text{Aut}_S(E^H)$.
Consequently, we get a homomorphism of complex Lie algebras
\begin{equation}
\label{eq:30.5}
\eta : \mathfrak{g}_S \longrightarrow \coh{0}{X}{\text{At}_S(E^H)}.
\end{equation}
Next define
\begin{equation}
\label{eq:30.6}
\eta_1 : \mathfrak{g}_S \to \coh{0}{X}{\text{At}_S^\tau(E^H)}, \,\,\, v \mapsto (\eta(v), d\beta(v)) \in 
\coh{0}{X}{\text{At}_S(E^H)} \oplus \mathfrak{g},
\end{equation}
where $d \beta : \mathfrak{g}_S \to \text{Lie}(G_S) \hookrightarrow \mathfrak{g}$ is the homomorphism of Lie 
algebras associated to $\beta$ in \eqref{eq:30}.
It is easy to verify that 
$(\eta(v), d\beta(v)) \in \coh{0}{X}{\text{At}_S^\tau(E^H)} \subset
\coh{0}{X}{\text{At}_S(E^H)} \oplus \mathfrak{g}$ (see \eqref{eq:14} for the definition of $\text{At}_S^\tau(E^H)$). Clearly, $\eta_1$ in \eqref{eq:30.6}
is an injective homomorphism of complex Lie algebras.
Since $X$ is compact, using the similar statements 
as in  \cite[Proposition 3.1]{BP}, we can show that $\eta_1$ is surjective.

\end{proof}

Observe that there is a natural action of $\cat{G}_S$ 
on $X$ defined as follows
\begin{equation}
\label{eq:31}
\chi : \cat{G}_S \times X \to X \,\,\,\,\, ((\theta, g), x) \mapsto \tau(g,x), x \in X,
\end{equation}
where $\tau$ is the action in \eqref{eq:10}.
Then, we have a vector bundle $\text{At}^\chi_S(E^H)$
over $X$ as constructed in \eqref{eq:14}.

\begin{proposition}
\label{prop:5}
There is a natural isomorphism of vector bundles 
\begin{equation}
\label{eq:32}
\text{At}^{\chi}_S(E^H) \longrightarrow \ad{E^H} \oplus (X \times \coh{0}{X}{\text{At}^{\tau}_S(E^H)})
\end{equation}
where $X \times \coh{0}{X}{\text{At}^{\tau}_S(E^H)}$ is a trivial vector bundle on $X$ with fibre $\coh{0}{X}{\text{At}^{\tau}_S(E^H)}$.
\end{proposition}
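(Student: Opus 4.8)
The plan is to construct the isomorphism \eqref{eq:32} fiberwise and then check that it glues to a global bundle map. First I would analyze the vector bundle $\text{At}^\chi_S(E^H)$ using its definition from \eqref{eq:14}, but now applied to the action $\chi$ of $\cat{G}_S$ on $X$. Since $\chi((\theta,g),x) = \tau(g,x)$ factors through the homomorphism $\beta : \cat{G}_S \to G_S \hookrightarrow G$ followed by the $G$-action $\tau$, the differential $d'\chi : X \times \mathfrak{g}_S \to T_{X/S}$ is the composition of $\id{X} \times d\beta : X \times \mathfrak{g}_S \to X \times \mathfrak{g}$ with $d'\tau : X \times \mathfrak{g} \to T_{X/S}$ from \eqref{eq:11}. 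Using the identification of $\mathfrak{g}_S$ with $\coh{0}{X}{\text{At}^\tau_S(E^H)}$ from Proposition \ref{prop:4}, and the description \eqref{eq:23} of how the Atiyah bundle transforms under a Lie group homomorphism, I would obtain
$$
\text{At}^\chi_S(E^H) = \{(u,v) \in \text{At}_S(E^H) \oplus (X \times \mathfrak{g}_S) \;\vert\; \widetilde{d\varpi}(u) = d'\chi(v)\}.
$$

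The key step is then to unwind this description. Given $v \in \mathfrak{g}_S = \coh{0}{X}{\text{At}^\tau_S(E^H)}$, the global section $v$ itself determines a section of $\text{At}_S(E^H)$ via the first projection of \eqref{eq:14}, call it $\overline{v} \in \coh{0}{X}{\text{At}_S(E^H)}$, and moreover $\widetilde{d\varpi}(\overline{v}(x)) = d'\tau(d\beta(v))(x) = d'\chi(v)(x)$ for every $x \in X$ by the compatibility in \eqref{eq:14}. Therefore, for any $(u,v) \in \text{At}^\chi_S(E^H)$ we have $\widetilde{d\varpi}(u - \overline{v}(x)) = 0$, so $u - \overline{v}(x)$ lies in the image of $\iota$, i.e.\ in $\ad{E^H}_x$. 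This shows the assignment $(u,v) \mapsto (u - \overline{v}(x),\, v)$ is a well-defined holomorphic bundle map $\text{At}^\chi_S(E^H) \to \ad{E^H} \oplus (X \times \mathfrak{g}_S)$, and it is clearly an isomorphism with inverse $(w,v) \mapsto (\iota(w) + \overline{v}(x),\, v)$. Since $\mathfrak{g}_S \cong \coh{0}{X}{\text{At}^\tau_S(E^H)}$ canonically by Proposition \ref{prop:4}, this is exactly \eqref{eq:32}.

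I expect the main obstacle to be bookkeeping the precise sense in which a global section $v \in \coh{0}{X}{\text{At}^\tau_S(E^H)}$ "is" an element of the Lie algebra $\mathfrak{g}_S$ acting on $X$, and checking that the resulting vector field on $X$ agrees with $d'\chi(v)$ — this is where Proposition \ref{prop:4} is used essentially, since the identification there was built precisely so that the infinitesimal action of $v$ on $X$ (the image under $\widetilde{d\varpi}$ composed with projection) is the one induced by $\tau$ via $d\beta$. Once this compatibility is in hand, the naturality of the isomorphism (independence of choices, holomorphicity, and functoriality in $E^H$) follows from the fact that every map used — $\iota$, $\widetilde{d\varpi}$, the projections in \eqref{eq:14}, and the identification of Proposition \ref{prop:4} — is canonical. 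I would also remark that the splitting exhibited here immediately equips $E^H$ with a tautological relative holomorphic $\cat{G}_S$-connection (the composite $X \times \mathfrak{g}_S \hookrightarrow \ad{E^H} \oplus (X\times\mathfrak{g}_S) \cong \text{At}^\chi_S(E^H)$ given by $v \mapsto (0,v)$), which is presumably the point of the proposition and sets up Proposition \ref{prop:6}.
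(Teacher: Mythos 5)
Your proposal is correct and follows essentially the same route as the paper: both identify $\mathfrak{g}_S$ with $\coh{0}{X}{\text{At}^{\tau}_S(E^H)}$ via Proposition \ref{prop:4}, subtract from the Atiyah-bundle component the value at $x$ of the global section corresponding to the $\mathfrak{g}_S$-component, observe that the difference lands in $\ad{E^H}$ (as the kernel of the relevant projection), and pair this with the projection to $X \times \mathfrak{g}_S$ to get the isomorphism. The only cosmetic difference is that the paper phrases the subtraction inside $\text{At}^{\tau}_S(E^H) \oplus (X \times \mathfrak{g}_S)$ using the description \eqref{eq:23} and kills it with $q$, whereas you work inside $\text{At}_S(E^H) \oplus (X \times \mathfrak{g}_S)$ and kill it with $\widetilde{d\varpi}$; these are equivalent presentations.
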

\begin{proof}
Note that we have a natural projection 
$p_1 : \text{At}^\chi_S(E^H) \to X \times \mathfrak{g}_S$ from the short exact sequence as in \eqref{eq:17} for the group $\cat{G}_S$.
From previous Proposition \ref{prop:4}, $\mathfrak{g}_S$
is identified with 
$\coh{0}{X}{\text{At}^{\tau}_S(E^H)}$, therefore
we have 
$$p_1 : \text{At}^\chi_S(E^H) \longrightarrow X \times 
\coh{0}{X}{\text{At}^{\tau}_S(E^H)}.$$
Further, the action of $\cat{G}_S$ on $X$ factors 
through the action of $G$ on $X$, therefore from \eqref{eq:23} we 
have another description of $\text{At}^\chi_S(E^H)$ as 
subbundle of $\text{At}^{\tau}_S(E^H) \oplus (X \times \mathfrak{g}_S)$. Thus, we have a natural projection 
$$p' : \text{At}^{\tau}_S(E^H) \oplus (X \times \mathfrak{g}_S) = \text{At}^{\tau}_S(E^H) \oplus  (X \times 
\coh{0}{X}{\text{At}^{\tau}_S(E^H)} )\longrightarrow \text{At}^{\tau}_S(E^H)$$
which sends $(a, (x, \eta))\mapsto a - \eta (x) $, where 
$x \in X, a \in \text{At}^{\tau}_S(E^H)_x $ and $\eta \in \coh{0}{X}{\text{At}^{\tau}_S(E^H)}.$ 
From \eqref{eq:23}, it follows that 
$$q \circ (p' \vert_{\text{At}^\chi_S(E^H)}) = 0,$$
where $q$ is the projection in \eqref{eq:17}.
Therefore, the restriction $p' \vert_{\text{At}^\chi_S(E^H)}$ produces a homomorphism of vector bundles
\begin{equation}
\label{eq:33}
p_2 : \text{At}^\chi_S(E^H) \longrightarrow \ker{(q)} = \ad{E^H}.
\end{equation}
From $p_1$ and $p_2$, we get a homomorphism 
$$p_1 \oplus p_2 : \text{At}^\chi_S(E^H) \longrightarrow \ad{E^H} \oplus (X \times \coh{0}{X}{\text{At}^{\tau}_S(E^H)}),$$
which is an isomorphism.

\end{proof}

\begin{proposition}
\label{prop:6}
The holomorphic principal $H$-bundle $E^H$ admits a tautological
relative holomorphic $\cat{G}_S$-connection. The relative curvature of this  relative holomorphic $\cat{G}_S$-connection on $E^H$ vanishes identically.
\end{proposition}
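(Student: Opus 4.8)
The plan is to exhibit the tautological splitting of the relative Atiyah sequence \eqref{eq:17} for the group $\cat{G}_S$ directly from Proposition \ref{prop:5}, and then to read off the vanishing of the curvature from the fact that this splitting is a Lie algebra homomorphism. Recall that a relative holomorphic $\cat{G}_S$-connection on $E^H$ is a holomorphic splitting of
\begin{equation*}
0 \,\longrightarrow\, \ad{E^H}\, \stackrel{\iota_0}{\longrightarrow}\,
\text{At}_S^\chi(E^H) \, \stackrel{p_1 }{\longrightarrow}\,  X \times \mathfrak{g}_S \,\longrightarrow\, 0,
\end{equation*}
where by Proposition \ref{prop:4} we identify $\mathfrak{g}_S = \coh{0}{X}{\text{At}^{\tau}_S(E^H)}$, so that the quotient bundle is the trivial bundle $X \times \coh{0}{X}{\text{At}^{\tau}_S(E^H)}$. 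The isomorphism $p_1 \oplus p_2$ of Proposition \ref{prop:5} visibly carries this short exact sequence to the split sequence
$$0 \to \ad{E^H} \to \ad{E^H}\oplus\big(X\times\coh{0}{X}{\text{At}^{\tau}_S(E^H)}\big)\to X\times\coh{0}{X}{\text{At}^{\tau}_S(E^H)}\to 0,$$
with its obvious inclusion of the second summand as a splitting. Pulling that splitting back through $(p_1\oplus p_2)^{-1}$ gives the desired tautological relative holomorphic $\cat{G}_S$-connection $\nabla^{\mathrm{taut}}$; concretely, for a global section $\eta\in\coh{0}{X}{\text{At}^{\tau}_S(E^H)}$, one has $\nabla^{\mathrm{taut}}(\eta)_x = \eta(x)\in \text{At}^{\tau}_S(E^H)_x\subset \text{At}^\chi_S(E^H)_x$, the point being that the \emph{constant} section of $X\times\mathfrak g_S$ with value $\eta$ maps, under $(p_1\oplus p_2)^{-1}$, to the evaluation $x\mapsto\eta(x)$ (with zero component in $\ad{E^H}$).

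For the curvature statement, I would use the description of $\cat R(\nabla)$ from Section \ref{curvature}: for sections $\alpha,\beta$ of $X\times\mathfrak g_S$ over an open set $U$,
$$\cat R(\nabla^{\mathrm{taut}})(\alpha,\beta)=[\nabla^{\mathrm{taut}}(\alpha),\nabla^{\mathrm{taut}}(\beta)]-\nabla^{\mathrm{taut}}([\alpha,\beta]),$$
and by $\struct{X}$-linearity and skew-symmetry (the two displayed properties in Section \ref{curvature}) it suffices to evaluate this on \emph{constant} sections $\alpha\equiv\eta_1$, $\beta\equiv\eta_2$ with $\eta_1,\eta_2\in\mathfrak g_S=\coh{0}{X}{\text{At}^{\tau}_S(E^H)}$. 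On such sections $[\alpha,\beta]$ is the constant section with value $[\eta_1,\eta_2]_{\mathfrak g_S}$, and the key point is that the assignment $\eta\mapsto\nabla^{\mathrm{taut}}(\eta)=(x\mapsto\eta(x))$ is exactly the identification of $\mathfrak g_S$ with $\coh{0}{X}{\text{At}^{\tau}_S(E^H)}$ of Proposition \ref{prop:4}, which is a homomorphism of Lie algebras, while the bracket of sections of $\text{At}^{\tau}_S(E^H)$ restricted to global sections is the bracket on $\coh{0}{X}{\text{At}^{\tau}_S(E^H)}$. Hence $[\nabla^{\mathrm{taut}}(\eta_1),\nabla^{\mathrm{taut}}(\eta_2)]=\nabla^{\mathrm{taut}}([\eta_1,\eta_2])$ as sections of $\text{At}^{\tau}_S(E^H)$, and a fortiori inside $\text{At}^\chi_S(E^H)$, so $\cat R(\nabla^{\mathrm{taut}})$ vanishes on constant sections and therefore identically.

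I expect the only real subtlety to be bookkeeping: one must check that under the isomorphism $p_1\oplus p_2$ the inclusion $\iota_0$ of $\ad{E^H}$ into $\text{At}^\chi_S(E^H)$ corresponds to the first-summand inclusion into $\ad{E^H}\oplus(X\times\coh{0}{X}{\text{At}^{\tau}_S(E^H)})$, and that $p_1$ corresponds to the second-summand projection — both of which follow by unwinding the definitions of $p_1$ and $p_2$ in the proof of Proposition \ref{prop:5} (the map $p_2$ restricted to $\iota_0(\ad{E^H})$ is the identity because on $(\iota(u),0)$ the formula $(a,(x,\eta))\mapsto a-\eta(x)$ returns $\iota_0(u)$ when $\eta=0$, and $p_1$ kills $\iota_0(\ad{E^H})$ since that sits in the kernel of the projection to $X\times\mathfrak g_S$). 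The other point worth stating carefully is that reducing the curvature identity to constant sections is legitimate: any local section of the trivial bundle $X\times\mathfrak g_S$ is an $\struct{X}$-linear combination of constant sections, so $\struct{X}$-bilinearity of $(\alpha,\beta)\mapsto\cat R(\nabla^{\mathrm{taut}})(\alpha,\beta)$ — established in Section \ref{curvature} — forces $\cat R(\nabla^{\mathrm{taut}})\equiv 0$ once it vanishes on constants. No genuinely hard analysis is involved; compactness of $X$ enters only through Proposition \ref{prop:4}, which we are entitled to assume.
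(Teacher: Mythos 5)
Your proposal is correct and follows essentially the same route as the paper, whose entire proof is the one-line remark that the Proposition ``easily follows'' from the morphism $p_2$ of \eqref{eq:33} --- i.e.\ from the retraction $\text{At}^\chi_S(E^H)\to\ad{E^H}$ furnished by the isomorphism $p_1\oplus p_2$ of Proposition \ref{prop:5}, which is exactly the splitting you transport back via $(p_1\oplus p_2)^{-1}$. Your reduction of the curvature computation to constant sections and the appeal to the Lie algebra identification of Proposition \ref{prop:4} supply the details the paper leaves implicit.
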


\begin{proof}
Using the morphism $p_2$  in \eqref{eq:33},  Proposition easily follows. 
\end{proof}

\section{Relative equivariant bundles and relative connections}
\label{equi}
 In this section, we introduce the notion of  relative equivariance structure (see \cite{BKN} for the absolute case).
 Let $\pi : X \to S$ is $G$-equivariant map, where $G$ acts on $X$ and $S$ via  the actions $\tau$ in \eqref{eq:10}  and $\nu$ in \eqref{eq:24.1} respectively. 
 
 Let $E^H \,\stackrel{\varpi}{\longrightarrow}\, X \xrightarrow{\pi} S$ be a holomorphic principal $H$-bundle over $X/S$.
 
 A relative equivariance structure on the principal 
 $H$-bundle $E^H $ is a holomorphic action of $G$
 on the total space  $E^H$
 \begin{equation}
 \label{eq:34}
 \sigma^E : G \times E^H \longrightarrow E^H
 \end{equation}
 such that  the following diagrams commute.
\begin{enumerate}
\item 
\begin{equation*}
\xymatrix{
G \times E^H \ar[d]_{\id{G} \times \varpi} \ar[r]^{\sigma^E} & E^H \ar[d]^{\varpi } \\
G \times X \ar[d]_{\id{G} \times \pi} \ar[r]^{\tau} & X  \ar[d]^{\pi} \\ 
G \times S \ar[r]^{\nu} & S \\}
\end{equation*}
where $\varpi$ and $\tau$ are maps defined in 
\eqref{eq:1} and \eqref{eq:7}

\item 
\begin{equation*}
\xymatrix{
G \times E^H \times H \ar[d]_{\id{G} \times \sigma} \ar[r]^{\sigma^E \times \id{H}} & E^H \times H \ar[d]^{\sigma} \\
G \times E^H \ar[d]_{\id{G} \times \varpi} \ar[r]^{\sigma^E} & E^H  \ar[d]^{\varpi} \\ 
G \times X \ar[d]_{\id{G} \times \pi} \ar[r]^{\tau} & X  \ar[d]^{\pi} \\
G \times S \ar[r]^{\nu} & S \\
}
\end{equation*}
where $\sigma$ is defined in 
\eqref{eq:7}.
\end{enumerate}

A relative equivariant principal $H$-bundle is a principal $H$-bundle over $X/S$ with a relative equivariant structure.

Further, if $\pi : X \to S$ is $G$-invariant, then for every $s \in S$, the principal $H$-bundle $E^H_s$ over 
$X_s$ has a equivariant structure, that is, we have a family $\{E^H_s\}_{s \in S}$ of equivariant principal 
$H$-bundle parametrised by $S$.

\begin{proposition}
\label{prop:7} Suppose  $\pi : X \to S$ is $G$-invariant. Let $(E^H, \sigma^E)$ be a relative 
equivariant principal $H$-bundle over $X/S$. Then,
$E^H$ has a tautological relative holomorphic $G$-connection. The relative curvature of this relative  holomorphic $G$-connection vanishes identically.
\end{proposition}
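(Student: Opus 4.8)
The plan is to reproduce, in the relative $G$-equivariant setting, the construction that produced the tautological $\cat{G}_S$-connection in Proposition~\ref{prop:6}, but now directly from the $G$-action $\sigma^E$ rather than from the full symmetry group $\cat{G}_S$. The hypothesis that $\pi : X \to S$ is $G$-invariant means the action $\nu$ on $S$ is trivial, so $\sigma^E$ covers an action $\tau$ of $G$ on $X$ that preserves each fibre $X_s$, and we have the short exact sequence \eqref{eq:17} at our disposal, together with the relative Atiyah bundle $\text{At}_S^\tau(E^H) \subset \text{At}_S(E^H)\oplus (X\times\mathfrak{g})$ and the surjection $q$ onto $X\times\mathfrak{g}$. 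First I would differentiate the action $\sigma^E$: for each $v\in\mathfrak{g}$ the one-parameter subgroup $\exp(tv)$ acts on $E^H$ by bundle automorphisms commuting with the $H$-action (by the second commuting diagram in the definition of a relative equivariant structure) and covering the flow of the fundamental vector field of $v$ on $X$ (by the first diagram), which lies in $T_{X/S}$ since $\pi$ is $G$-invariant. Hence the infinitesimal generator of this flow on $E^H$ is an $H$-invariant section of $T_{E^H/S}$, i.e.\ descends to a holomorphic section of $\text{At}_S(E^H)$ over $X$; call it $\widetilde{h}(v)$, and note $\widetilde{d\varpi}(\widetilde{h}(v)) = d'\tau(v)$ by construction of $d'\tau$ in \eqref{eq:11}.

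Next I would define $h : X\times\mathfrak{g} \to \text{At}_S^\tau(E^H)$ by $h(x,v) := (\widetilde{h}(v)_x,\, v)$. The identity $\widetilde{d\varpi}(\widetilde{h}(v)) = d'\tau(v)$ says precisely that $\mu(\widetilde{h}(v),v) = 0$ in the notation of \eqref{eq:12}--\eqref{eq:13}, so $h$ indeed lands in $\text{At}_S^\tau(E^H) = \mu^{-1}(0)$; the map $h$ is $\struct{X}$-linear in $v$ because $v\mapsto\widetilde{h}(v)$ is the differential of a group action and therefore $\C$-linear and $\struct{X}$-linear after the evident identifications, and $q\circ h = \id{X\times\mathfrak{g}}$ is immediate from the formula \eqref{eq:16} for $q$. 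Thus $h$ is a holomorphic splitting of \eqref{eq:17}, i.e.\ a relative holomorphic $G$-connection on $E^H$; this is the asserted tautological connection.

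Finally, for the curvature, I would invoke the discussion in Section~\ref{curvature}: the relative curvature $\cat{R}(h)$ is, for sections $\alpha,\beta$ of $X\times\mathfrak{g}$, the failure of $h$ to be a Lie algebra homomorphism, $\cat{R}(h)(\alpha,\beta) = [h(\alpha),h(\beta)] - h([\alpha,\beta])$, viewed inside $\coh{0}{X}{\ad{E^H}}\otimes\bigwedge^2\mathfrak{g}^*$ via \eqref{eq:20}. The point is that $v\mapsto\widetilde{h}(v)$ is exactly the Lie algebra homomorphism $\mathfrak{g}\to\coh{0}{X}{\text{At}_S(E^H)}$ induced by differentiating the group action $\sigma^E$ (this is the standard statement that the derivative of a Lie group action is an anti-/homomorphism into the Lie algebra of vector fields; the sheaf of sections of $\text{At}_S(E^H)$ carries the Lie bracket recalled in Section~\ref{curvature}), and since $v\mapsto(v)$ is trivially a homomorphism into the abelian-bracket... no, $\mathfrak{g}$ with its own bracket — the second component of $h$ is the identity $\mathfrak{g}\to\mathfrak{g}$, which is a homomorphism — it follows that $h$ is a Lie algebra homomorphism on sections, hence $\cat{R}(h) = 0$ identically. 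The main obstacle I anticipate is the bookkeeping in the first paragraph: carefully checking that the infinitesimal generator of $\sigma^E$ along $v$ is genuinely $H$-invariant and fibrewise-tangent, so that it descends to a \emph{holomorphic} section of $\text{At}_S^\tau(E^H)$ globally on $X$ — this is exactly where the two commuting diagrams in the definition of relative equivariant structure, and the $G$-invariance of $\pi$, are used, and it is the analogue of the identification in Proposition~\ref{prop:4}. Everything after that is the formal splitting-and-bracket computation, parallel to Proposition~\ref{prop:6}.
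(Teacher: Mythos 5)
Your proof is correct, but it takes a genuinely different route from the paper's. The paper deduces Proposition~\ref{prop:7} from the machinery of Section~\ref{lift}: the equivariant structure gives a group homomorphism $\beta^E : G \to \cat{G}_S$, $g \mapsto (\sigma^E_g, g)$, splitting \eqref{eq:30}; one then takes the tautological flat $\cat{G}_S$-connection of Proposition~\ref{prop:6} and pushes it down to a $G$-connection via the functoriality of Proposition~\ref{prop:3}, which also transports the vanishing of the curvature. You instead work infinitesimally and explicitly: you differentiate $\sigma^E$ to get, for each $v \in \mathfrak{g}$, an $H$-invariant vector field on $E^H$ killed by $d(\pi\circ\varpi)$ (this is where both commuting diagrams and the $G$-invariance of $\pi$ enter), hence a section $\widetilde{h}(v)$ of $\mathrm{At}_S(E^H)$ with $\widetilde{d\varpi}(\widetilde{h}(v)) = d'\tau(v)$, and you assemble the splitting $h(x,v) = (\widetilde{h}(v)_x, v)$ of \eqref{eq:17} by hand; flatness is the statement that $v \mapsto \widetilde{h}(v)$ preserves brackets. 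The two constructions produce the same connection (under the identification of Proposition~\ref{prop:4}, $d\beta^E(v)$ corresponds exactly to your pair $(\widetilde{h}(v), v)$), but your argument is self-contained and does not invoke the compactness of $X$, the Lie group structure on $\cat{G}_S$, or Propositions~\ref{prop:3}--\ref{prop:6}; the paper's version buys economy of exposition and sets up the framework that is genuinely needed for the converse direction (Proposition~\ref{prop:8} and Theorem~\ref{thm:3}), where one must integrate a Lie algebra map into $\cat{G}_S$ back to a group action. One small point you share with the paper and should at least flag: for a left action the map $v \mapsto \widetilde{h}(v)$ is a Lie algebra \emph{anti}-homomorphism under the usual conventions, so the bracket on the $X \times \mathfrak{g}$ summand (or the definition of the fundamental vector fields) must be normalized consistently for the curvature computation to close up; your momentary hesitation in the last paragraph is exactly at this spot, and it is resolved by fixing the sign convention once and for all.
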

\begin{proof}
Observe that for every $g \in G$, we have a holomorphic 
automorphism
$$\sigma^E_g : E^H \longrightarrow E^H$$
 such that for every $s \in S$, the map 
$$\sigma^E_{g,s} : E^H_s \longrightarrow E^H_{\nu_g(s)},  \,\,\,\,\,\, z \mapsto \sigma^E(g, z), \,\,\,\, z \in E^H_s$$ 
is an isomorphism over $\tau_g : X_s \longrightarrow X_{\nu_g(s)}$ in \eqref{eq:27}.
Thus, in this case $$G_S = G,$$ where $G_S$ is defined in
\eqref{eq:28}. 
In fact, we get a group homomorphism 
$$\beta^E : G_S = G \longrightarrow \cat{G}_S$$
defined by $g \mapsto (g, \sigma^E_g)$ such that 
$$\beta \circ \beta^E = \id{G},$$
where $\beta$ is the homomorphism in \eqref{eq:30}.
In view of Proposition \ref{prop:6}, we have a tautological relative holomorphic $\cat{G}_S$-connection
and we consider this. Next, using the above group 
homomorphism $\beta^E$ and Proposition \ref{prop:3}, we get a relative holomorphic $G$-connection on $E^H$.
Again from Proposition \ref{prop:6} and Proposition 
\ref{prop:3}, the relative curvature of this relative 
holomorphic $G$-connection vanishes identically. This completes the proof. 
\end{proof}

The following is the converse of Proposition \ref{prop:7}.
 
 \begin{proposition}
 \label{prop:8}
 Suppose that $\pi : X \to S$ is $G$-invariant.  Let 
 $h : X \times \mathfrak{g} \longrightarrow \text{At}^{\tau}_S(E^H)$ be a relative holomorphic $G$-connection
 on $E^H$ such that the relative curvature vanishes identically. Assume that $G$ is simply connected. Then, there exists a relative equivariant structure
 $$\sigma^E : G \times E^H \longrightarrow E^H$$
 such that the relative holomorphic $G$-connection associated to it as in Proposition \ref{prop:7} coincides with $h$.
 \end{proposition}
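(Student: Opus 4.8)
\textbf{Proof proposal for Proposition \ref{prop:8}.}

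The plan is to integrate the flat relative $G$-connection $h$ to a $G$-action on $E^H$, mimicking the classical fact that a flat connection with the group acting on the base produces an equivariant structure (cf.\ \cite{BKN,BP}), but now fibrewise over $S$. First I would reformulate the data: a splitting $h : X\times\mathfrak{g}\to\text{At}_S^\tau(E^H)$ of \eqref{eq:17} with $\cat{R}(h)=0$ means, by the discussion in Section \ref{curvature}, that $h$ is a homomorphism of sheaves of Lie algebras from the constant sheaf $\mathfrak{g}$ (sections of $X\times\mathfrak{g}$ that are fibrewise constant) into the sheaf of sections of $\text{At}_S^\tau(E^H)$, hence into the sheaf of sections of $\text{At}_S(E^H)$ via the first projection. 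Composing with the natural inclusion, each $\xi\in\mathfrak{g}$ gives a holomorphic section $h(\xi)$ of $\text{At}_S(E^H)=(T_{E^H/S})/H$, i.e.\ an $H$-invariant holomorphic vector field $\widetilde{h(\xi)}$ on $E^H$ that is tangent to the fibres of $\pi\circ\varpi$ and projects under $\widetilde{d\varpi}$ to $d'\tau(\xi)$, the fundamental vector field of the $G$-action $\tau$ on $X$. Flatness says $\xi\mapsto\widetilde{h(\xi)}$ is a Lie algebra homomorphism $\mathfrak{g}\to H^0(E^H,T_{E^H/S})^H$.

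Next I would integrate this infinitesimal action. Since $\pi:X\to S$ is $G$-invariant, every fundamental vector field $d'\tau(\xi)$ on $X$ is tangent to the fibres $X_s$; as $X$ is compact (assumed in Section \ref{lift}) each $X_s$ is compact, so the vector fields $\widetilde{h(\xi)}$ on $E^H$ are complete (completeness of $H$-invariant vector fields on $E^H_s$ follows from compactness of $X_s$ and properness of $\varpi$, the fibre $H$ being a Lie group acted on freely). Thus we obtain a local action of $\mathfrak{g}$ by complete commuting-up-to-bracket vector fields; since $G$ is connected and simply connected, Lie's second theorem (in the holomorphic category, applied to the "action morphism" $\mathfrak{g}\to H^0(E^H,TE^H)$) integrates this to a holomorphic action $\sigma^E:G\times E^H\to E^H$. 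Here I would invoke the standard Palais-type integration result for a finite-dimensional Lie algebra of complete holomorphic vector fields on a complex manifold, exactly as is done in \cite{BP} for the absolute case; the compactness of $X$ (hence of each $E^H_s\to X_s$ fibrewise) is what guarantees completeness.

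Then I would check the two compatibility diagrams in the definition \eqref{eq:34}. Compatibility with $\varpi$ and $\tau$ (first diagram) holds because $\widetilde{d\varpi}(h(\xi))=d'\tau(\xi)$ for all $\xi$, so the generating vector fields are $\varpi$-related to those of $\tau$, and integrating gives $\varpi\circ\sigma^E_g=\tau_g\circ\varpi$; the $G$-invariance of $\pi$ then forces $\pi\circ\tau_g=\pi$, giving the bottom square. Compatibility with the $H$-action $\sigma$ (second diagram) holds because each $\widetilde{h(\xi)}$ is $H$-invariant by construction (it descends to $\text{At}_S(E^H)$), so its flow commutes with the $H$-action; hence $\sigma^E_g$ is a bundle automorphism covering $\tau_g$. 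Thus $(E^H,\sigma^E)$ is a relative equivariant principal $H$-bundle. Finally, to see that the tautological relative holomorphic $G$-connection attached to $\sigma^E$ by Proposition \ref{prop:7} is exactly $h$: by construction that connection sends $\xi\in\mathfrak{g}$ to the image in $\text{At}_S^\tau(E^H)$ of the fundamental vector field of $\sigma^E$, which is $\widetilde{h(\xi)}$, i.e.\ it is $h$ itself; equivalently, running the correspondence of Propositions \ref{prop:6}, \ref{prop:3} backwards recovers $h$ from $\sigma^E$ because $d\beta^E=$ the original map $\mathfrak{g}\to\mathfrak{g}_S=H^0(X,\text{At}_S^\tau(E^H))$ induced by $h$.

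The main obstacle is the integration step: one must verify that the holomorphic vector fields $\widetilde{h(\xi)}$ on the (possibly noncompact, since $H$ need not be compact) total space $E^H$ are genuinely complete and that their flows assemble into a global holomorphic $G$-action rather than merely a local one. This is where simple connectivity of $G$ and the compactness of $X$ enter decisively, and where I would lean on the argument of \cite[Proposition 3.1 and its proof]{BP}, adapted so that all constructions are carried out relative to $S$ (fibrewise), using that $\pi$ is a proper submersion so that fibrewise completeness globalizes over $S$. A secondary point requiring care is holomorphicity of $\sigma^E$ jointly in $(g,z)$, which follows from the holomorphic dependence of solutions of holomorphic ODEs on initial conditions together with the holomorphic group structure of $G$.
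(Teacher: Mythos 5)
Your proposal is correct, and its skeleton agrees with the paper's: flatness of $h$ turns the induced map on $\mathfrak{g}$ into a Lie algebra homomorphism, and simple connectivity of $G$ is what allows one to integrate. The difference is in what you integrate into. The paper composes $h$ with taking global sections to get $h_*:\mathfrak{g}\to \coh{0}{X}{\text{At}_S^\tau(E^H)}$, invokes Proposition \ref{prop:4} to identify the target with $\Lie{\cat{G}_S}$ for the finite-dimensional complex Lie group $\cat{G}_S$ of \eqref{eq:30}, and then applies the textbook theorem that a Lie algebra homomorphism out of a simply connected group integrates to a unique group homomorphism $\epsilon:G\to\cat{G}_S$; the action of $\cat{G}_S$ on $E^H$ then immediately yields $\sigma^E$, and the compatibility diagrams and the identification of the tautological connection with $h$ come for free from Propositions \ref{prop:6}, \ref{prop:7} and \ref{prop:3}. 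You instead integrate the $H$-invariant vector fields $\widetilde{h(\xi)}$ on $E^H$ directly via a Palais-type theorem, which forces you to verify completeness, joint holomorphicity of the flow, and the two commuting diagrams by hand. These verifications are exactly the analytic content that the paper has already packaged into the Lie group structure on $\cat{G}_S$ and into Proposition \ref{prop:4} (whose proof is where compactness of $X$ is actually consumed), so your route is more self-contained but longer; the paper's route is shorter because it reuses that machinery. One small remark: your completeness argument does not really need compactness of $X$ per se --- the fundamental vector fields $d'\tau(\xi)$ are complete because $\tau$ is a global action, and their $H$-invariant lifts are then complete by the usual principal-bundle argument --- whereas compactness is genuinely needed for the surjectivity statement in Proposition \ref{prop:4} on which the paper's version relies.
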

 \begin{proof}
 Since $\pi : X \to S$ is $G$-invariant, from Proposition \ref{prop:4}, we have
 $$\Lie{\cat{G}_S} = \mathfrak{g}_S =  \coh{0}{X}{\text{At}_S^\tau(E^H)}.$$
 Let 
 $$h_* : \mathfrak{g} =  \coh{0}{X}{ X \times \mathfrak{g}} \longrightarrow  \coh{0}{X}{\text{At}_S^\tau(E^H)} = \mathfrak{g}_S $$
 be the $\C$-linear map induced by $h$. Since, the relative curvature of the relative holomorphic $G$-connection $h$ vanishes identically, it follows that 
 $h_*$ is a homomorphism of Lie algebras.
 Further, since $G$ is simply connected, there is a unique holomorphic homomorphism of complex Lie groups 
 $$\epsilon : G \longrightarrow \cat{G}_S$$
 such that the differential 
 $$\text{d} \epsilon (1) : \mathfrak{g} \longrightarrow \mathfrak{g}_S$$ coincides with $h_*$, where $1$ denotes the identity element of $G$. 
 Recall that $\cat{G}_S$ acts naturally on $E^H$, and using the above holomorphic homomorphism $\epsilon$ of complex Lie groups, we produce a relative equivariant
 structure $\sigma^E$ on $E^H$. Now, observe that  the corresponding relative holomorphic $G$-connection given by Proposition 
 \ref{prop:7} coincides with $h$.   
 
 \end{proof}

\begin{theorem}
\label{thm:3}
Suppose that $\pi : X \to S$ is $G$-invariant.
Assume that $G$ is a semisimple and simply connected
affine algebraic group defined over $\C$. 
Let $E^H \,\stackrel{\varpi}{\longrightarrow}\, X \xrightarrow{\pi} S$
be a holomorphic principal $H$-bundle that admits a relative holomorphic $G$-connection $h$. Then, $E^H$
admits a relative equivariant structure 
$$\sigma^E : G \times E^H \longrightarrow E^H.$$
\end{theorem}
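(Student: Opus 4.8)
\textbf{Proof proposal for Theorem \ref{thm:3}.}

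The plan is to reduce Theorem \ref{thm:3} to Proposition \ref{prop:8}, whose hypotheses are that $\pi$ is $G$-invariant (given here), that $G$ is simply connected (given, since $G$ is semisimple and simply connected), and that the relative curvature of the relative holomorphic $G$-connection $h$ vanishes identically. The only missing ingredient is therefore the vanishing of the relative curvature $\cat{R}(h)$, and the entire content of the proof is to deduce this from the semisimplicity of $G$. So the first step is to recall from \eqref{eq:20} that
\begin{equation*}
\cat{R}(h) \in \coh{0}{X}{\ad{E^H}} \otimes \bigwedge^2 \mathfrak{g}^*\, ,
\end{equation*}
and that, since the bracket computation defining $\cat{R}(h)$ is $\struct{X}$-bilinear and alternating in the two $\mathfrak{g}$-arguments, $\cat{R}(h)$ measures precisely the failure of the $\C$-linear map $h_* : \mathfrak{g} \to \coh{0}{X}{\text{At}^\tau_S(E^H)} = \mathfrak{g}_S$ (the map induced by $h$ on global sections) to be a Lie algebra homomorphism. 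Thus $\cat{R}(h) = 0$ if and only if $h_*$ is a homomorphism of Lie algebras.

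The key step is to show $h_*$ is a Lie algebra homomorphism when $G$ is semisimple. Here is the mechanism. Applying $\coh{0}{X}{-}$ to the short exact sequence \eqref{eq:17} gives
\begin{equation*}
0 \longrightarrow \coh{0}{X}{\ad{E^H}} \longrightarrow \coh{0}{X}{\text{At}^\tau_S(E^H)} \xrightarrow{q_*} \coh{0}{X}{X \times \mathfrak{g}} = \mathfrak{g}\, ,
\end{equation*}
and $q_* \circ h_* = \id{\mathfrak{g}}$ because $h$ splits \eqref{eq:17}; all three of these are maps of Lie algebras except possibly $h_*$, and $q_* \cat{R}(h) = 0$ so $\cat{R}(h)$ takes values in the ideal $\coh{0}{X}{\ad{E^H}}$ of $\mathfrak{g}_S$. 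Therefore $h_*(\mathfrak{g})$ is a $\C$-linear complement to $\coh{0}{X}{\ad{E^H}}$ inside the finite-dimensional Lie algebra $\mathfrak{g}_S$ (finite-dimensionality uses compactness of $X$, as in Proposition \ref{prop:4}), mapping isomorphically onto the quotient $\mathfrak{g}$ via $q_*$. Now invoke semisimplicity: one approach is Weyl's theorem on complete reducibility / the Levi--Malcev decomposition — since $\mathfrak{g}$ is semisimple, the surjection of Lie algebras $q_* : \mathfrak{g}_S \to \mathfrak{g}$ admits a Lie algebra section $j : \mathfrak{g} \to \mathfrak{g}_S$, and any two $\C$-linear sections differ by an element of $\Hom[\C]{\mathfrak{g}}{\coh{0}{X}{\ad{E^H}}}$. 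A cleaner route: the obstruction to upgrading the $\C$-linear splitting $h_*$ to a Lie-algebra splitting, i.e. to correcting $h_*$ by some $\psi \in \Hom[\C]{\mathfrak{g}}{\coh{0}{X}{\ad{E^H}}}$ so that $h_* + \psi$ is a homomorphism, is a class in $\coh{2}{\mathfrak{g}}{\coh{0}{X}{\ad{E^H}}}$ (Chevalley--Eilenberg cohomology), where $\mathfrak{g}$ acts on $\coh{0}{X}{\ad{E^H}}$ through $h_*$ and the bracket; by Whitehead's second lemma this group vanishes for semisimple $\mathfrak{g}$. But this only shows \emph{some} splitting is a homomorphism, not that $h_*$ itself is. To get that $h_*$ itself is a homomorphism — which is what Proposition \ref{prop:8} needs in order to produce a connection coinciding with $h$ — I would argue directly that $\cat{R}(h)$, viewed as a $2$-cocycle in $\coh{2}{\mathfrak{g}}{\coh{0}{X}{\ad{E^H}}}$, must already be a coboundary by Whitehead, and then exploit the fact that for semisimple $\mathfrak{g}$ not merely $H^2$ but the whole question collapses: any $\C$-linear section of a surjection onto a semisimple Lie algebra whose curvature is a coboundary is conjugate to a homomorphic section, and one can then replace $h$ by the corresponding connection.

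To avoid this subtlety entirely, the cleanest plan is: use Whitehead to produce \emph{a} Lie-algebra section $j : \mathfrak{g} \to \mathfrak{g}_S = \coh{0}{X}{\text{At}^\tau_S(E^H)}$ of $q_*$ (equivalently, a relative holomorphic $G$-connection $h'$ with vanishing relative curvature), then apply Proposition \ref{prop:8} to $h'$: since $G$ is simply connected, $j$ integrates to a holomorphic homomorphism $\epsilon : G \to \cat{G}_S$ with $\beta \circ \epsilon = \id{G}$, and composing with the natural action of $\cat{G}_S$ on $E^H$ yields the desired relative equivariant structure $\sigma^E : G \times E^H \to E^H$. This proves the existence statement of Theorem \ref{thm:3} as stated (note the theorem only asserts existence of $\sigma^E$, not that its associated connection equals the given $h$), and the use of the hypothesis that $G$ is an affine algebraic group is to guarantee that the abstract Lie-group homomorphism $\epsilon$ produced from $j$ is holomorphic (indeed algebraic), so that $\sigma^E$ is a holomorphic action. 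The main obstacle I anticipate is precisely the Lie-algebra-level step: carefully identifying $\cat{R}(h)$ with a Chevalley--Eilenberg $2$-cocycle valued in the $\mathfrak{g}$-module $\coh{0}{X}{\ad{E^H}}$, checking that the module structure is the correct one, and then citing Whitehead's second lemma (this is where semisimplicity is essential and where the argument breaks for general, e.g.\ abelian, $G$) — together with the bookkeeping, needing compactness of $X$, that $\mathfrak{g}_S$ is finite-dimensional so that these cohomological statements apply.
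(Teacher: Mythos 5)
Your final plan is exactly the paper's proof: since the theorem only asserts existence of $\sigma^E$, one replaces $h$ by a new relative holomorphic $G$-connection coming from a Lie-algebra section of the surjection $\mathfrak{g}_S \twoheadrightarrow \mathfrak{g}$ (which exists because $\mathfrak{g}$ is semisimple and $\mathfrak{g}_S$ is finite-dimensional by compactness of $X$), so that the relative curvature vanishes, and then applies Proposition \ref{prop:8}. The only cosmetic difference is that the paper cites Bourbaki (Corollaire 3, p.~91) for the splitting where you invoke Whitehead's second lemma / Levi--Malcev, and your preliminary worry about whether $h_*$ itself is a homomorphism is correctly discarded as unnecessary.
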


\begin{proof}
In view of Proposition \ref{prop:8}, it is enough to show that $E^H$ admits a relative holomorphic $G$-connection such that the relative curvature vanishes.
Now, consider a part of long exact sequence 
\begin{equation}
\label{eq:35}
\delta :  \mathfrak{g}_S = \coh{0}{X}{\text{At}_S^\tau(E^H)} \longrightarrow \coh{0}{X}{X \times \mathfrak{g  
}} = \mathfrak{g}
\end{equation}
associated to the short exact sequence \eqref{eq:17}.
Now, consider the homomorphism 
\begin{equation}
\label{eq:36}
h_* : \mathfrak{g} = \coh{0}{X}{X \times \mathfrak{g  
}} \longrightarrow \coh{0}{X}{\text{At}_S^\tau(E^H)} = 
\mathfrak{g}_S
\end{equation}
associated to the relative holomorphic $G$-connection $h$ on $E^H$. 
Since $E^H$ admits a relative holomorphic $G$-connection
$h$, we have 
$$\delta \circ h_* = \id{\mathfrak{g}},$$
and hence the Lie algebra homomorphism $\delta$  is 
surjective. As $G$ is semisimple, there exists a Lie 
subalgebra 
$$\mathfrak{h} \subset \coh{0}{X}{\text{At}_S^\tau(E^H)}$$ 
such that the restriction
$$\hat{\delta} := \delta \vert_\mathfrak{h} : \mathfrak{h} \longrightarrow \mathfrak{g} = \coh{0}{X}{X \times \mathfrak{g  
}}$$
is an isomorphism \cite[p. 91, Corollaire 3]{NB}.
We fix a subspace $\mathfrak{h}$ as above. Define 
$\widetilde{h_*}$  to be the following composition
$$\mathfrak{g} = \coh{0}{X}{X \times \mathfrak{g}} \xrightarrow{\hat{\delta}^{-1}} \mathfrak{h} 
\hookrightarrow  \coh{0}{X}{\text{At}_S^\tau(E^H)} =
\mathfrak{g}_S,$$
which is a Lie algebra homomorphism, and hence the relative curvature of the  relative holomorphic  
$G$-connection induced from $\widetilde{h_*}$ on $E^H$
vanishes identically. This completes the proof.

\end{proof}

\end{document}